\newtheorem{corollary}{Corollary}
\newtheorem{definition}{Definition}
\newtheorem{lemma}{Lemma}
\newtheorem{proposition}{Proposition}
\newtheorem{theorem}{Theorem}
\newtheorem{remark}{Remark}
\newtheorem{example}{Example}
\def\NN{\mathbb{N}}
\def\QQ{\mathbb{Q}}
\def\ZZ{\mathbb{Z}}
\title{Simultaneous approximations to $p$--adic numbers and algebraic dependence via multidimensional continued fractions}
\author{Nadir Murru and Lea Terracini \\
Department of Mathematics G. Peano, University of Torino\\
Via Carlo Alberto 10, 10123, Torino, Italy\\
nadir.murru@unito.it, lea.terracini@unito.it}
\date{}
\begin{document}
\maketitle

\begin{abstract}
Unlike the real case, there are not many studies and general techniques for providing simultaneous approximations in the field of $p$--adic numbers $\mathbb Q_p$. Here, we study the use of multidimensional continued fractions (MCFs) in this context. MCFs were introduced in $\mathbb R$ by Jacobi and Perron as a generalization of continued fractions and they have been recently defined also in $\mathbb Q_p$. We focus on the dimension two and study the quality of the simultaneous approximation to two $p$-adic numbers provided by $p$-adic MCFs, where $p$ is an odd prime. Moreover, given algebraically dependent $p$--adic numbers, we see when infinitely many simultaneous approximations satisfy the same algebraic relation.
This also allows to give a condition that ensures the finiteness of the $p$--adic Jacobi--Perron algorithm when it processes some kinds of 
$\mathbb Q$--linearly dependent inputs.
\end{abstract}

\textbf{Keywords:} Jacobi--Perron algorithm, multidimensional continued fractions, p--adic numbers, simultaneous approximations.

\textbf{2000 Mathematics Subject Classification:} 11J61, 11J70, 12J25

\section{Introduction}

Continued fractions give a representation for any real number by means of a sequence of integers, providing along the way rational approximations. In particular, they provide best approximations, i.e., the $n$--th convergent of the continued fraction of a real number is closer to it than any other rational number with a smaller or equal denominator. Multidimensional continued fractions (MCFs) are a generalization of classical continued fractions introduced by Jacobi \cite{Jac} and Perron \cite{Per} in an attempt to answer  a question posed by Hermite about a possible generalization of the Lagrange theorem for continued fractions to other algebraic irrationalities. A MCF is a representation of a $m$-tuple of real numbers $(\alpha_0^{(1)}, \ldots, \alpha_0^{(m)})$  by means of $m$ sequences of integers $((a_n^{(1)})_{n\geq 0}, \ldots, (a_n^{(m)})_{n\geq 0})$ (finite or infinite) obtained by the Jacobi--Perron algorithm:
\begin{equation} \label{eq:JP}
\begin{cases} a_n^{(i)} = [\alpha_n^{(i)}], \quad i = 1, ..., m, \cr
\alpha_{n+1}^{(1)} = \cfrac{1}{\alpha_n^{(m)} - a_n^{(m)}}, \cr
\alpha_{n+1}^{(i)} = \cfrac{\alpha_n^{(i-1)} - a_n^{(i-1)}}{\alpha_n^{(m)} - a_n^{(m)}}, \quad i = 2, ..., m, \end{cases} n = 0, 1, 2, ...
\end{equation}
We shall write
\[ (\alpha_0^{(1)}, \ldots, \alpha_0^{(m)}) = [(a_0^{(1)}, a_1^{(1)}, \ldots), \ldots, (a_0^{(m)}, a_1^{(m)}, \ldots)]. \]
The Jacobi--Perron algorithm has been widely studied concerning its periodicity and approximation properties. For instance, in \cite{Ber}, \cite{Ber2}, \cite{Lev}, \cite{Raj} the authors provided some classes of algebraic irrationalities whose expansion by the Jacobi--Perron algorithm becomes eventually periodic. In \cite{Mur}, a criterion of periodicity, involving linear recurrence sequences, is given. The periodicity of the Jacobi--Perron algorithm is also related to the study of Pisot numbers \cite{Dub1}, \cite{Dub2}. Further studies on MCFs can be found in \cite{Adam}, \cite{Das}, \cite{Mur2}, \cite{Zhu}. 

Continued fractions for $p$-adic numbers were introduced  by several authors \cite{Bro1}, \cite{Rub}, \cite{Schn} and more recently they have been generalized to higher dimensions. In \cite{MT}, the authors studied the fundamental properties of MCFs in $\mathbb Q_p$, focusing on convergence properties and finite expansions, whereas in \cite{MT2} further properties regarding finiteness and periodicity of the $p$--adic Jacobi--Perron algorithm have been proved. 

The study of simultaneous approximations of real numbers is a very important topic in Diophantine approximation; classical and fundamental results can be found in \cite{Adams}, \cite{Cus}, \cite{Dav1}, \cite{Dav2}, \cite{Lag}. Some results can also be found regarding simultaneous approximations in $\mathbb Q_p$, involving a $p$--adic number and its integral powers \cite{Buda}, \cite{Mahler}. Specific results regarding the case of a $p$-adic number and its square are investigated in \cite{Beg} and \cite{Teu}. However, there are no general techniques for providing simultaneous approximations of $p$-adic numbers and for studying the quality of such approximations. 

MCFs have been deeply studied in this context for the real case, since they provide simultaneous rational approximations to real numbers. The quality of these simultaneous approximations has been studied in several works, such as \cite{Bal}, \cite{Che}, \cite{Lag2}, \cite{Pal}, \cite{Sch}, thus, it seems natural to exploit MCFs in $\mathbb Q_p$ for approaching the problem of constructing simultaneous approximations to $p$--adic numbers. In this paper, we give a first study in this direction and we also investigate the relation between simultaneous approximations and algebraic dependence.

The paper is structured as follows. In Section \ref{sec:def}, we introduce the notation and we give some basic definitions and properties. Section \ref{sec:approx} is devoted to the study of the quality of the simultaneous approximations provided by $p$-adic  MCFs. Finally, in Section \ref{sec:alg}, we focus on algebraically dependent pairs of $p$--adic numbers; firstly we find a condition on the quality of approximation under which a sequence of simultaneous rational approximations satisfies the same algebraic relation. Secondly, we apply this result to MCFs and deduce a condition that ensures the finiteness of the $p$--adic Jacobi--Perron algorithm when it processes some kinds of $\mathbb Q$--linearly dependent inputs.

\section{Definitions and useful properties} \label{sec:def}

In the following, we will focus on $p$--adic MCFs of dimension 2, i.e., using the notation of the previous section, we set $m = 2$. Most of the results obtained in this paper can be adapted to  any dimension $m \geq 2$, but in the general case the notation is very annoying and possibly confusing. Hence, we now recall the $p$--adic Jacobi--Perron algorithm for the case $m=2$; for more details see \cite{MT}. From now on, $p$ will be an odd prime number. 

\begin{definition}
The \emph{Browkin $s$-function} $s:\QQ_p\longrightarrow \mathcal{Y} =\ZZ\left [\frac 1 p\right ]\cap \left (-\frac p2,\frac p 2\right)$, is defined by
$$s(\alpha)= \sum_{j=k}^0 x_jp^j,$$
with $\alpha\in\QQ_p$ written as
\(\alpha=\sum_{j=k}^\infty x_jp^j,  k \in\ZZ \hbox{ and } x_j\in \ZZ\cap \left (-\frac p 2,\frac p 2\right).\)
\end{definition}

Given $\alpha, \beta \in \mathbb Q_p$, we get the corresponding MCF $(\alpha, \beta) = [(a_0, a_1, \ldots), (b_0, b_1, \ldots)]$ by the following iterative equations
\[ \begin{cases} a_k = s(\alpha_k) \cr b_k = s(\beta_k) \cr \alpha_{k+1} = \cfrac{1}{\beta_{k} - b_{k}} \cr \beta_{k+1} = \cfrac{\alpha_k - a_k}{\beta_k - b_k} \end{cases} \]
for $k = 0, 1, \ldots$, with $\alpha_0 = \alpha$ and $\beta_0 = \beta$. If the algorithm does not stop, then the initial values are  represented by the following MCF:
\begin{equation*} \alpha=a_0+\cfrac{b_1+\cfrac{1}{a_2+\cfrac{b_3+\cfrac{1}{\ddots}}{a_3+\cfrac{\ddots}{\ddots}}}}{a_1+\cfrac{b_2+\cfrac{1}{a_3+\cfrac{\ddots}{\ddots}}}{a_2+\cfrac{b_3+\cfrac{1}{\ddots}}{a_3+\cfrac{\ddots}{\ddots}}}} \quad \text{and} \quad \beta=b_0+\cfrac{1}{a_1+\cfrac{b_2+\cfrac{1}{a_3+\cfrac{\ddots}{\ddots}}}{a_2+\cfrac{b_3+\cfrac{1}{\ddots}}{a_3+\cfrac{\ddots}{\ddots}}}}.\end{equation*}
We define the sequences $(A_k)_{k \geq -2}$, $(B_k)_{k \geq -2}$, $(C_k)_{k \geq -2}$ of the numerators and denominators of the convergents, i.e.
\begin{equation} \label{eq:seq-ndc}
[(a_0, \ldots, a_n), (b_0, \ldots, b_n)] = \left( \cfrac{A_n}{C_n}, \cfrac{B_n}{C_n} \right) = (Q_n^{\alpha}, Q_n^\beta)
\end{equation}
as follows
\begin{equation}\label{seq-ABC}
\begin{cases} A_{-2} = 0, \quad A_{-1} = 1, \quad A_0 = a_0 \cr B_{-2} = 1, \quad B_{-1} = 0, \quad B_0 = b_0 \cr C_{-2} = 0, \quad C_{-1} = 0, \quad C_0 = 1 \end{cases} \quad 
\begin{cases} A_n = a_n A_{n-1} + b_n A_{n-2} + A_{n-3} \cr B_n = a_n B_{n-1} + b_n B_{n-2} + B_{n-3} \cr C_n = a_n C_{n-1} + b_n C_{n-2} + C_{n-3} \end{cases}\end{equation}
for any $n \geq 1$. Then 
\begin{equation}\label{matrix}
\prod_{k=0}^n \begin{pmatrix} a_k & 1 & 0 \\ b_k & 0 & 1 \\ 1 & 0 & 0  \end{pmatrix} = \begin{pmatrix} A_n & A_{n-1} & A_{n-2} \\  B_n & B_{n-1} & B_{n-2} \\  C_n & C_{n-1} & C_{n-2} \end{pmatrix}
\end{equation}
for any $n \geq 0$. \\
We define the sequences $(\tilde A_k)_{k \geq -1} = (A_k C_{k-1} - A_{k-1}C_k)$ and $(\tilde B_k)_{k \geq -1} = (B_k C_{k-1} - B_{k-1}B_k)$, arising from the difference between two consecutive convergents:
\[Q_n^\alpha - Q_{n-1}^\alpha = \cfrac{\tilde A_n}{C_n C_{n-1}}, \quad Q_n^\beta - Q_{n-1}^\beta = \cfrac{\tilde B_n}{C_n C_{n-1}}.\]
The following relations hold true:
\begin{equation} \label{eq:seq-num}
\begin{cases} \tilde A_{-1} = 0, \quad \tilde A_{0} = -1, \quad \tilde A_1 = b_1 \cr  \tilde B_{-1} = 0, \quad \tilde B_{0} = 0, \quad \tilde B_1 = 1 \end{cases} \quad 
\begin{cases} \tilde A_n = -b_n \tilde A_{n-1} - a_{n-1} \tilde A_{n-2} + \tilde A_{n-3} \cr \tilde B_n = -b_n \tilde B_{n-1} - a_{n-1} \tilde B_{n-2} + \tilde B_{n-3} \end{cases}
\end{equation}
for any $n \geq 2$. Indeed,
\[\tilde A_n = A_n C_{n-1} - A_{n-1} C_n = (a_n A_{n-1} +b_n A_{n-2} + A_{n-3}) C_{n-1} - A_n(a_n C_{n-1} + b_n C_{n-2} + C_{n-3})\]
\[=-b_n (A_{n-1} C_{n-2} - A_{n-2} C_{n-1}) + A_{n-3} (a_{n-1}C_{n-2}+b_{n-1}C_{n-3}+C_{n-4})-C_{n-3}(a_{n-1}A_{n-2}+b_{n-1}A_{n-3}+A_{n-4})\]
\[= -b_n \tilde A_{n-1} - a_{n-1} \tilde A_{n-2} + \tilde A_{n-3};\]
similarly for the $\tilde B_k$'s. From \eqref{matrix}, we have
\[1 = \det \begin{pmatrix} A_n & A_{n-1} & A_{n-2} \\  B_n & B_{n-1} & B_{n-2} \\  C_n & C_{n-1} & C_{n-2} \end{pmatrix} =\] 
\[= A_n B_{n-1} C_{n-2} - A_{n-1}B_nC_{n-2} - A_nB_{n-2}C_{n-1} + A_{n-2}B_nC_{n-1} + A_{n-1}B_{n-2}C_n - A_{n-2}B_{n-1} C_n,\]
from which
\[\cfrac{1}{C_nC_{n-1}C_{n-2}} = (Q_n^\alpha - Q_{n-1}^\alpha)(Q_n^\beta - Q_{n-2}^\beta) - (Q_n^\beta - Q_{n-1}^\beta)(Q_n^\alpha - Q_{n-2}^\alpha)\]
Moreover, since $\alpha = \cfrac{\alpha_n A_{n-1} + \beta_n A_{n-2} + A_{n-3}}{\alpha_n C_{n-1} + \beta_n C_{n-2} + C_{n-3}}$ and $\beta = \cfrac{\alpha_n B_{n-1} + \beta_n B_{n-2} + B_{n-3}}{\alpha_n C_{n-1} + \beta_n C_{n-2} + C_{n-3}}$, we have
\begin{equation}\label{diff}
(\alpha - Q_{n-1}^\alpha)(\beta - Q_{n-2}^\beta) - (\beta - Q_{n-1}^\beta)(\alpha - Q_{n-2}^\alpha) = \cfrac{1}{C_{n-1}C_{n-2}(\alpha_nC_{n-1}+\beta_nC_{n-2}+C_{n-3})}.\end{equation}

Let us observe that the previous properties hold for general MCFs, while we now give some specific results regarding only $p$--adic MCFs. In the following we will use $\nu_p(\cdot)$ for the $p$--adic valuation, $|\cdot|_p$ for the $p$--adic norm and $|\cdot|_\infty$ for the Euclidean norm. Moreover, we define
\[h_n = \nu_p\left( \cfrac{b_n}{a_n} \right), \quad k_n = \nu_p\left( \cfrac{1}{a_n} \right), \quad K_n = k_1 + \ldots + k_n\]
for any $n \geq 1$ and the sequences $(V_k^{\alpha})_{k \geq -2} = (C_k \alpha - A_k)_{k \geq -2}$, $(V_k^\beta)_{k \geq -2} = (C_k \beta - B_k)_{k \geq -2}$. We recall from \cite{MT} the following properties:
\begin{itemize}
\item $|a_n|_p>1$ and $|b_n|_p<|a_n|_p $, for any $n \geq 1$;
\item $|a_n|_p = |\alpha_n|_p$, $|b_n|_p = \begin{cases} |\beta_n|_p, \ \text{ if } |\beta_n|_p \geq 1 \cr 0, \ \text{ if } \  |\beta_n|_p <1  \end{cases}$,\quad  for any $n \geq 1$;
\item $\nu_p(C_n) = -K_n$, for any $n \geq 1$;
\item $\lim_{n \rightarrow +\infty} |V_n^{\alpha}|_p = \lim_{n \rightarrow +\infty} |V_n^{\beta}|_p = 0$.
\end{itemize}

\section{The quality of the approximations of $p$-adic MCFs} \label{sec:approx}

In this section we investigate how well the convergents of a bidimensional continued fraction approach their limit in $\mathbb Q_p$.

\subsection{The rate of convergence} In first instance we give some results about the rate of convergence of the real sequences $|V_n^\alpha|_p$ and $|V_n^\beta|_p$.

\begin{theorem}
Let $[(a_0, a_1, \ldots), (b_0, b_1, \ldots)]$ be the $p$-adic MCF expansion of $(\alpha, \beta) \in \mathbb Q_p^2$, then 
\[\nu_p(\alpha - Q_n^{\alpha}) \geq K_n + \lfloor \cfrac{n+2}{2} \rfloor, \quad \nu_p(\beta - Q_n^{\beta}) \geq K_n + \lfloor \cfrac{n+3}{2} \rfloor.\]
\end{theorem}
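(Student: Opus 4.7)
The plan is to reformulate the statement in terms of $v_n^\alpha := \nu_p(V_n^\alpha)$ and $v_n^\beta := \nu_p(V_n^\beta)$. Since $\nu_p(C_n) = -K_n$ by the bulleted list, the two inequalities become $v_n^\alpha \geq \lfloor (n+2)/2 \rfloor$ and $v_n^\beta \geq \lfloor (n+3)/2 \rfloor$, which I would prove by a simultaneous induction on $n\geq -2$.

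The engine of the induction is a two-term recurrence with tail coefficients. Applying the identity $\alpha = \frac{\alpha_m A_{m-1} + \beta_m A_{m-2} + A_{m-3}}{\alpha_m C_{m-1} + \beta_m C_{m-2} + C_{m-3}}$ (and its $\beta$-analogue, both recalled just before \eqref{diff}) with $m = n+1$ and rearranging yields
\[
\alpha_{n+1} V_n^{\alpha} + \beta_{n+1} V_{n-1}^{\alpha} + V_{n-2}^{\alpha} = 0, \qquad \alpha_{n+1} V_n^{\beta} + \beta_{n+1} V_{n-1}^{\beta} + V_{n-2}^{\beta} = 0
\]
for every $n \geq 0$. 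Solving for $V_n^\alpha$ gives $V_n^\alpha = -\frac{\beta_{n+1}}{\alpha_{n+1}} V_{n-1}^\alpha - \frac{1}{\alpha_{n+1}} V_{n-2}^\alpha$ and similarly for $V_n^\beta$, so the inductive step reduces to $p$-adic lower bounds on the two coefficients $1/\alpha_{n+1}$ and $\beta_{n+1}/\alpha_{n+1}$.

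The first bound is immediate: $\nu_p(1/\alpha_{n+1}) = k_{n+1} \geq 1$. For the second I would case-split on $|\beta_{n+1}|_p$. If $|\beta_{n+1}|_p \geq 1$ then by the bulleted properties $\nu_p(\beta_{n+1}/\alpha_{n+1}) = \nu_p(b_{n+1}/a_{n+1}) = h_{n+1} \geq 1$; if $|\beta_{n+1}|_p < 1$ then $\beta_{n+1} \in p\ZZ_p$, which gives $\nu_p(\beta_{n+1}) \geq 1$ and hence $\nu_p(\beta_{n+1}/\alpha_{n+1}) \geq 1 + k_{n+1} \geq 2$. Either way both coefficients have $\nu_p \geq 1$, so the ultrametric inequality yields $v_n^\alpha \geq 1 + \min(v_{n-1}^\alpha, v_{n-2}^\alpha)$ and $v_n^\beta \geq 1 + \min(v_{n-1}^\beta, v_{n-2}^\beta)$. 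Combined with the base values $V_{-2}^\alpha=0$, $V_{-1}^\alpha=-1$, $V_{-2}^\beta=-1$, $V_{-1}^\beta=0$ and the identity $\lfloor (n+2)/2\rfloor = 1 + \lfloor n/2\rfloor$, the induction closes; the asymmetric base cases for $V^\alpha$ versus $V^\beta$ produce exactly the $+1$ shift between the two bounds.

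The only part that I expect to require genuine care is the $|\beta_{n+1}|_p < 1$ branch, where $b_{n+1}=0$ hides the useful information and one must extract $\nu_p(\beta_{n+1}) \geq 1$ directly from $\beta_{n+1}$ being a $p$-adic integer of absolute value less than $1$. Everything else is a direct and standard application of the non-Archimedean triangle inequality to the recurrence above.
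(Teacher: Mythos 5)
Your proposal is correct, but it follows a genuinely different route from the paper's. The paper works with the integer sequences $\tilde A_n=A_nC_{n-1}-A_{n-1}C_n$ and $\tilde B_n$, proves $\nu_p(\tilde A_{n+1})\geq \nu_p(C_{n+1})+\lfloor\frac{n+2}{2}\rfloor$ by induction on the third-order recurrence \eqref{eq:seq-num} (whose coefficients are $-b_n,-a_{n-1},1$), and then recovers the bound on $\nu_p(\alpha-Q_n^\alpha)$ by applying the ultrametric inequality to the telescoping sum $Q_{n+k}^\alpha-Q_n^\alpha$ and letting $k\to\infty$. You instead work directly with $V_n^\alpha=C_n\alpha-A_n$ and the three-term relation $\alpha_{n+1}V_n+\beta_{n+1}V_{n-1}+V_{n-2}=0$, which is exactly the paper's formula \eqref{eq:solita1}, deployed there only later in the proof of Proposition \ref{prop:costrsupera}. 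Your key observation --- that both coefficients $\beta_{n+1}/\alpha_{n+1}$ and $1/\alpha_{n+1}$ have valuation at least $1$ in every case, including the $b_{n+1}=0$ branch --- gives $v_n\geq 1+\min(v_{n-1},v_{n-2})$, and the floor-function bound then falls out of the two-step structure of the recurrence together with the asymmetric initial data $V_{-1}^\alpha=-1$, $V_{-1}^\beta=0$; I checked the base cases and the inductive step and they close as you claim (note the two inductions are actually independent, one for $\alpha$ and one for $\beta$, not genuinely simultaneous). What your approach buys: it proves Corollary \ref{cor:1} directly, dispenses with the limit argument entirely, and is in effect a sharpened instance of Proposition \ref{prop:costrsupera} --- sharpened because you use $\min(v_{n-1},v_{n-2})$ where that proposition would demand valuation $\geq \ell_n+\ell_{n-1}=2$ on the coefficient of $V_{n-2}$, a condition that fails when $k_{n+1}=1$. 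What the paper's version buys is that it stays within integer sequences and $p$-adic valuations of rational data, never dividing by the complete quotients.
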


\begin{proof}
We will prove by induction that 
\[\nu_p\left( Q_{n+1}^\alpha - Q_n^\alpha \right) = \nu_p\left( \cfrac{\tilde A_{n+1}}{C_{n+1}C_n} \right) \geq -\nu_p(C_n) + \lfloor \cfrac{n+2}{2} \rfloor,\]
i.e., we have to prove that
\[\nu_p(\tilde A_{n+1}) \geq \nu_p(C_{n+1}) + \lfloor \cfrac{n+2}{2} \rfloor\]
for any $n \geq -2$. We can observe that
\[ \nu_p(\tilde A_{-1}) = \nu_p(C_{-1}) = \infty, \]
for $n=-2$, and
\[ \nu_p(\tilde A_0) = \nu_p(\tilde C_0) = 0, \]
for $n=-1$. Moreover,
\[ \nu_p(\tilde A_1) = \nu_p(b_1), \quad \nu_p(C_1) = \nu_p(a_1), \]
for $n=0$, and we know that $\nu_p(b_1) > \nu_p(a_1)$. Now, we proceed by induction. Consider
\[\nu_p(\tilde A_{n+1}) = \nu_p(-b_{n+1} \tilde A_n - a_n \tilde A_{n-1} + \tilde A_{n-2}) \geq \inf \{ \nu_p(b_{n+1}\tilde A_n), \nu_p(a_n \tilde A_{n-1}), \nu_p(\tilde A_{n-2}) \},\]
by inductive hypothesis we have
\[\nu_p(b_{n+1}\tilde A_n) \geq \nu_p(b_{n+1}) + \nu_p(C_n)+ \lfloor \cfrac{n+1}{2} \rfloor \geq \nu_p(a_{n+1}) + 1 + \nu_p(C_n) + \lfloor \cfrac{n+1}{2} \rfloor \geq \nu_p(C_{n+1}) + \lfloor \cfrac{n+2}{2} \rfloor. \]
Similarly,
\[\nu_p(a_n \tilde A_{n-1}) \geq \nu_p(a_n) + \nu_p(C_{n-1}) + \lfloor \cfrac{n}{2} \rfloor \geq \nu_p(C_{n+1}) + \lfloor \cfrac{n}{2} \rfloor + 1 = \nu_p(C_{n+1}) + \lfloor \cfrac{n+2}{2} \rfloor\]
and
\[\nu_p(\tilde A_{n-2}) \geq \nu_p(C_{n-2}) + \lfloor \cfrac{n-1}{2} \rfloor \geq \nu_p(C_{n+1}) + \lfloor \cfrac{n-1}{2} \rfloor + 3 \geq \nu_p(C_{n+1}) + \lfloor \cfrac{n+2}{2} \rfloor.\]
Thus, we also have $\nu_p(Q_{n+k}^\alpha - Q_n^\alpha) \geq -\nu_p(C_n) + \lfloor \frac{n+2}{2} \rfloor$ and for $k \rightarrow \infty$, we have $\nu_p(\alpha - Q_n^\alpha) \geq -\nu_p(C_n) + \lfloor \frac{n+2}{2} \rfloor$.\\
Similar arguments hold for proving $\nu_p(Q_{n+1}^\beta - Q_n^\beta) \geq -\nu_p(C_n) + \lfloor \frac{n+3}{2} \rfloor$, i.e., for proving $\nu_p(\tilde B_{n+1}) \geq \nu_p(C_{n+1}) +\lfloor \cfrac{n+3}{2} \rfloor$. We just check the basis of the induction:
\[\nu_p(\tilde B_{-1}) = \infty, \quad \nu_p(\tilde B_0) = \infty, \quad \nu_p(\tilde B_1) = 0\]
and
\[\nu_p(C_{-1}) = \infty, \quad \nu_p(C_0) = 0, \quad \nu_p(C_1) = \nu_p(a_1) < 0.\]

\end{proof}

\begin{corollary} \label{cor:1}
	Let $[(a_0, a_1, \ldots), (b_0, b_1, \ldots)]$ be the $p$-adic MCF expansion of $(\alpha, \beta) \in \mathbb Q_p^2$, then 
	\[\nu_p(V_n^\alpha) \geq \lfloor \cfrac{n+2}{2} \rfloor, \quad \nu_p(V_n^\beta) \geq \lfloor \cfrac{n+3}{2} \rfloor, \]
so that
		$$\min\{\nu_p(V_n^\alpha), \nu_p(V_n^\beta)\}\geq \lfloor \cfrac{n+2}{2} \rfloor =\lfloor \cfrac{n}{2} \rfloor +1.$$
\end{corollary}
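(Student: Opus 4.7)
The plan is to derive the corollary almost directly from the preceding theorem together with the known valuation formula for $C_n$, so no new induction should be needed.

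First, I would rewrite $V_n^\alpha$ and $V_n^\beta$ in terms of the convergents. Since $Q_n^\alpha = A_n/C_n$ and $Q_n^\beta = B_n/C_n$, we have the factorizations
\[
V_n^\alpha = C_n\alpha - A_n = C_n(\alpha - Q_n^\alpha), \qquad V_n^\beta = C_n\beta - B_n = C_n(\beta - Q_n^\beta),
\]
hence $\nu_p(V_n^\alpha) = \nu_p(C_n) + \nu_p(\alpha - Q_n^\alpha)$ and similarly for $\beta$.

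Next, I would plug in the bounds from the theorem just proved, namely $\nu_p(\alpha - Q_n^\alpha)\ge K_n+\lfloor(n+2)/2\rfloor$ and $\nu_p(\beta - Q_n^\beta)\ge K_n+\lfloor(n+3)/2\rfloor$, together with the identity $\nu_p(C_n) = -K_n$ recalled from \cite{MT} at the end of Section~\ref{sec:def}. The $K_n$ contributions cancel, immediately yielding
\[
\nu_p(V_n^\alpha) \ge \lfloor (n+2)/2 \rfloor, \qquad \nu_p(V_n^\beta) \ge \lfloor (n+3)/2 \rfloor.
\]

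Finally, for the concluding inequality I would just note that $\lfloor(n+3)/2\rfloor \ge \lfloor(n+2)/2\rfloor$ for every integer $n$, so the minimum is controlled by the $\alpha$-bound, and a trivial parity check gives $\lfloor(n+2)/2\rfloor = \lfloor n/2\rfloor + 1$. There is essentially no obstacle here: the corollary is a bookkeeping consequence of the theorem, and the only thing to be careful about is invoking the correct normalization $\nu_p(C_n)=-K_n$ (valid for $n\ge 1$) and handling the small values of $n$ by direct inspection of the initial data in \eqref{seq-ABC}.
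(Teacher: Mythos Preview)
Your proposal is correct and matches the paper's approach: the corollary is stated there without proof, as an immediate consequence of the preceding theorem together with $\nu_p(C_n)=-K_n$, which is exactly the derivation you outline.
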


\begin{remark}
In the real case, given $(\alpha, \beta) = [(a_0, a_1, \ldots), (b_0, b_1, \ldots)]$ it is well--known that
\[\left\vert\alpha - \cfrac{A_n}{C_n}\right\vert_\infty < \cfrac{1}{|C_n|_\infty}, \quad \left\vert\beta - \cfrac{B_n}{C_n}\right\vert_\infty < \cfrac{1}{|C_n|_\infty}.\]
In the $p$--adic case, a stronger result holds, indeed from the previous theorem  we have
\[\left\vert\alpha - \cfrac{A_n}{C_n}\right\vert_p < \cfrac{1}{k|C_n|_p}, \quad \left\vert\beta - \cfrac{B_n}{C_n}\right\vert_p < \cfrac{1}{k|C_n|_p}\]
where $k = k(n)$ tends to infinity.
\end{remark}

On the other hand formula \eqref{diff} implies
	$$\nu_p(V_{n-1}^\alpha V_{n-2}^\beta-V_{n-1}^\beta V_{n-2}^\alpha)=K_n$$
which provides an upper bound for the $p$-adic valuation of the $V_n$'s, namely
\begin{equation}\label{eq:upperbound} \min\{\nu_p(V_n^\alpha), \nu_p(V_n^\beta)\} + \min\{\nu_p(V_{n-1}^\alpha), \nu_p(V_{n-1}^\beta)\}\leq K_{n+1}  
\end{equation}
This shows that the lower bound for $\min\{\nu_p(V_n^\alpha), \nu_p(V_n^\beta)\} $ provided by Corollary \ref{cor:1} is optimal, in the sense that it is reached in some cases:
\begin{example}\label{ex:cattivaappr} Consider an infinite MCF such that $\nu_p(a_n)=-1$ for every $n\geq 1$. Then $K_n=n$ for every $n$, so that by Corollary \ref{cor:1} and formula \eqref{eq:upperbound} we get 
	$$ \min\{\nu_p(V_n^\alpha), \nu_p(V_n^\beta)\} + \min\{\nu_p(V_{n-1}^\alpha), \nu_p(V_{n-1}^\beta)\}=n+1$$ 
so that $\min\{\nu_p(V_n^\alpha), \nu_p(V_n^\beta)\}= \lfloor \cfrac{n}{2} \rfloor +1$ for every $n\ge 1$.
	\end{example}
However, in many other cases the bound provided by Corollary \ref{cor:1} can be improved, as stated by the following propositions:

\begin{proposition}\label{prop:costrsupera} Let $(\ell_n)_{n\geq 0}$ be a sequence of natural numbers $>0$; put $\ell_{-1}=\ell_{-2}=0$ and define $f(n)=\sum_{j=0}^n \ell_n$.
	Let  $[(a_0, a_1, \ldots), (b_0, b_1, \ldots)]$ be an infinite  $p$-adic MCF satisfying $h_{n+1}\geq\ell_{n}, k_{n+1}\geq \ell_n+\ell_{n-1}$ for $n\geq 0$. Then for every $n\in\NN$
	$$\min\{\nu_p(V_n^\alpha), \nu_p(V_n^\beta)\}\geq f(n).$$
	\end{proposition}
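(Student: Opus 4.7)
The plan is to upgrade the three-term recursion $V_n^\alpha = a_n V_{n-1}^\alpha + b_n V_{n-2}^\alpha + V_{n-3}^\alpha$ (inherited from \eqref{seq-ABC}) into a two-term recursion whose coefficients have positive $p$-adic valuation, and then run a short induction. The direct recursion is not sharp because $\nu_p(a_n) = -k_n$ is negative, so I would eliminate the $a_n V_{n-1}^\alpha$ and $V_{n-3}^\alpha$ contributions simultaneously using the complete-quotient identity obtained by clearing denominators in $\alpha = (\alpha_n A_{n-1} + \beta_n A_{n-2} + A_{n-3})/(\alpha_n C_{n-1} + \beta_n C_{n-2} + C_{n-3})$, namely
$$\alpha_n V_{n-1}^\alpha + \beta_n V_{n-2}^\alpha + V_{n-3}^\alpha = 0.$$
Substituting $V_{n-3}^\alpha = -\alpha_n V_{n-1}^\alpha - \beta_n V_{n-2}^\alpha$ into the three-term recurrence yields the sharp identity
$$V_n^\alpha = (a_n - \alpha_n)\,V_{n-1}^\alpha + (b_n - \beta_n)\,V_{n-2}^\alpha, \qquad n \geq 1,$$
and the same identity holds for $V_n^\beta$ with identical coefficients.

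The valuations of the new coefficients are read off from $\alpha_{n+1} = 1/(\beta_n - b_n)$ and $\beta_{n+1} = (\alpha_n - a_n)/(\beta_n - b_n)$: one gets $\nu_p(\beta_n - b_n) = k_{n+1}$ and $\nu_p(\alpha_n - a_n) = h_{n+1}$ whenever $b_{n+1} \neq 0$, while the degenerate case $b_{n+1} = 0$ forces $\nu_p(\beta_{n+1}) \geq 1$ and gives a strictly stronger bound. The hypotheses therefore translate into $\nu_p(\alpha_n - a_n) \geq \ell_n$ and $\nu_p(\beta_n - b_n) \geq \ell_n + \ell_{n-1}$. For the base cases, $\nu_p(V_{-1}^\alpha) = 0$, $\nu_p(V_{-1}^\beta) = \infty$ and $f(-1) = 0$ handle $n = -1$; at $n = 0$, the same valuation computation applied to $V_0^\alpha = \alpha - a_0 = \beta_1/\alpha_1$ and $V_0^\beta = \beta - b_0 = 1/\alpha_1$ gives $\nu_p(V_0^\alpha) \geq h_1 \geq \ell_0 = f(0)$ and $\nu_p(V_0^\beta) = k_1 \geq \ell_0 = f(0)$.

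For $n \geq 1$, the ultrametric inequality applied to the sharp identity together with the inductive hypothesis yields
$$\nu_p(V_n^\alpha) \geq \min\{\ell_n + f(n-1),\,(\ell_n + \ell_{n-1}) + f(n-2)\} = \ell_n + f(n-1) = f(n),$$
using $f(n-1) = f(n-2) + \ell_{n-1}$, and identically for $V_n^\beta$. The one creative step is spotting the sharp recursion $V_n = (a_n - \alpha_n) V_{n-1} + (b_n - \beta_n) V_{n-2}$ produced by combining the three-term recurrence with the complete-quotient identity; once this is in hand the argument reduces to a single ultrametric computation, and the main obstacle is merely the bookkeeping at the base cases and in the degenerate subcase $b_{n+1} = 0$, both of which are dispatched in one line each.
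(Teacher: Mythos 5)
Your proof is correct and is essentially the paper's own argument: your ``sharp identity'' $V_n=(a_n-\alpha_n)V_{n-1}+(b_n-\beta_n)V_{n-2}$ is precisely the two-term recursion $V_n=-\frac{\beta_{n+1}}{\alpha_{n+1}}V_{n-1}-\frac{1}{\alpha_{n+1}}V_{n-2}$ on which the paper's proof rests, since $a_n-\alpha_n=-\beta_{n+1}/\alpha_{n+1}$ and $b_n-\beta_n=-1/\alpha_{n+1}$ by the defining relations of the algorithm. The coefficient-valuation estimates (including the case split on $b_{n+1}=0$) and the ultrametric induction then coincide with the paper's, which merely phrases the conclusion as $V_n/p^{f(n)}\in\ZZ_p$ and starts the induction at $V_{-1},V_{-2}$ instead of verifying $n=0$ directly.
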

\begin{proof}
For $n\geq 1$,  either $\nu_p(\beta_n)>0$, $b_n=0$, $\nu_p\left(\frac {\beta_{n}}{\alpha_{n}}\right )>k_{n}$ or $\nu_p(\beta_n)=\nu_p(b_n)\leq 0$, $\nu_p\left(\frac {\beta_{n}}{\alpha_{n}}\right )=\nu_p\left(\frac {b_{n}}{a_{n}}\right )=h_n$. In any case 
$\nu_p\left(\frac {\beta_{n+1}}{\alpha_{n+1}}\right )\geq \ell_{n}$, for $n\geq 0$.
Let $V_n$ be either $V_n^\alpha$ or $V_n^\beta$. From the formula
\begin{equation}\label{eq:solita1} V_n=-\frac{\beta_{n+1}}{\alpha_{n+1}} V_{n-1}-\frac 1 {\alpha_{n+1}} V_{n-2}, \end{equation}
we get for $n\geq  0$
$$ \frac {V_n} {p^{f(n)}}=\mu_n\frac {V_{n-1}}{p^{f(n-1)}}+\nu_n\frac {V_{n-2}}{p^{f(n-2)}}$$
where
$$
\mu_n =-\frac{\beta_{n+1}}{\alpha_{n+1}}\cdot \frac 1 {p^{\ell_n}},\quad 
\nu_n  = -\frac{1}{\alpha_{n+1}}\cdot \frac 1 {p^{\ell_n+\ell_{n-1}}}
\in\ZZ_p.$$
Since $V_{-1}, V_{-2}\in\ZZ_p$ we obtain by induction $\frac {V_n}{p^{f(n)}}\in\ZZ_p$.
\end{proof}
\begin{corollary}\label{cor:supera}
Let $f:\NN \rightarrow \NN$ be any function. There are infinitely many $(\alpha,\beta)\in\QQ_p^2$ satisfying
$$\min\{\nu_p(V_n^\alpha), \nu_p(V_n^\beta)\}\geq f(n).$$
\end{corollary}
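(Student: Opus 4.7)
The plan is to derive the corollary from Proposition \ref{prop:costrsupera} by constructing, for any prescribed $f$, infinitely many admissible MCFs whose partial quotients grow fast enough.

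Given $f$, I would first choose a sequence of positive integers $(\ell_n)_{n\geq 0}$ with $f(n)\leq\sum_{j=0}^n\ell_j$ for every $n$; the simplest choice is $\ell_n=\max(f(n),1)$, which trivially satisfies $\sum_{j=0}^n \ell_j \geq \ell_n \geq f(n)$. Next I would exhibit explicit partial quotients realizing the hypotheses of Proposition \ref{prop:costrsupera}: set $a_n:=p^{-(\ell_{n-1}+\ell_{n-2})}$ and $b_n:=p^{-\ell_{n-2}}$ for $n\geq 1$ (with the convention $\ell_{-1}=\ell_{-2}=0$). Since $p$ is odd and all exponents are nonnegative integers, both $a_n$ and $b_n$ lie in $\mathcal{Y}$, and a direct computation gives $k_n=\ell_{n-1}+\ell_{n-2}$, $h_n=\ell_{n-1}$, together with the admissibility conditions $|a_n|_p>1$ and $|b_n|_p<|a_n|_p$ for $n\geq 1$; the initial digits $a_0,b_0$ may be chosen freely in $\mathcal{Y}$.

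To conclude, I would invoke the convergence result from \cite{MT} recalled in Section \ref{sec:def}: under these admissibility conditions the convergents converge in $\QQ_p^2$ to a unique pair $(\alpha,\beta)$ whose $p$--adic Jacobi--Perron expansion reproduces the chosen $(a_n,b_n)$. Proposition \ref{prop:costrsupera} then yields $\min\{\nu_p(V_n^\alpha),\nu_p(V_n^\beta)\}\geq\sum_{j=0}^n\ell_j\geq f(n)$, and letting $a_0$ range over the infinite set $\ZZ\cap(-p/2,p/2)$ (or, alternatively, enlarging any single $\ell_n$ above its prescribed lower bound) produces infinitely many distinct pairs $(\alpha,\beta)$. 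The only subtlety I anticipate is ensuring that the formally prescribed sequence really coincides with the Jacobi--Perron expansion of the resulting limit, which is precisely what the convergence theorem of \cite{MT} guarantees; the remainder of the argument is then a direct substitution into Proposition \ref{prop:costrsupera}.
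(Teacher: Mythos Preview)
Your approach is essentially the paper's own: reduce to Proposition~\ref{prop:costrsupera} by choosing a sequence $(\ell_n)$ with $\sum_{j\le n}\ell_j\ge f(n)$ and then observing that infinitely many admissible MCFs satisfy $h_{n+1}\ge\ell_n$, $k_{n+1}\ge\ell_n+\ell_{n-1}$. Your explicit choice $a_n=p^{-(\ell_{n-1}+\ell_{n-2})}$, $b_n=p^{-\ell_{n-2}}$ is a perfectly good instantiation of what the paper leaves implicit.

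One small slip: the set $\ZZ\cap(-p/2,p/2)$ is \emph{finite} (it has $p$ elements), so you cannot obtain infinitely many pairs by letting $a_0$ range over it. Your parenthetical alternative---enlarging some $\ell_n$, or equivalently letting $a_0$ range over the genuinely infinite set $\mathcal{Y}=\ZZ[\tfrac1p]\cap(-\tfrac p2,\tfrac p2)$---fixes this immediately, so the argument goes through.
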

\begin{proof} Of course we can assume $f(n)$ strictly increasing, so that $f(n)=\sum_{j=0}^n \ell_n$ with $\ell_n\in \NN, \ell_n>0$; the proof follows from Proposition \ref{prop:costrsupera} by observing that there are infinitely many
  $p$-adic MCF satisfying $h_{n+1}\geq\ell_{n}, k_{n+1}\geq \ell_n+\ell_{n-1}$ for $n\geq 0$. 
\end{proof}

We would like to investigate in which sense and to which extent the approximations given by $p$-adic convergents may be considered \lq\lq good approximations\rq\rq. Observe that the Browking $s$-function is locally constant, hence so is the function $\QQ_p^2\to \QQ^2$ associating to a pair $(\alpha,\beta)$ its $n$-th convergents $(Q_n^\alpha,Q_n^\beta)$ (where this function is defined). Therefore every $(\alpha,\beta)\in\QQ_p^2$ having a MCF of lenght $\geq n$, has a neighbourhood $U$ such that every $(\alpha',\beta')\in U$ has the same  $k$-convergents than $(\alpha,\beta)$ for $k\leq n$. The following proposition will provide an explicit radius for this neighbourhood.

\begin{proposition}\label{prop:goodapprox}
	Let $(\alpha, \beta)\in \QQ_p^2$ be such that the associated MCF $[(a_0, a_1, \ldots), (b_0, b_1, \ldots)]$ has lenght $\geq n$. Let  $(\alpha', \beta')\in\QQ_p^2$. If $\max\{|\alpha - \alpha'|_p, |\beta - \beta'|_p\} < \cfrac{1}{p^{2K_n}}$, then the MCF   $[(a_0', a_1', \ldots), (b_0', b_1', \ldots)]$ associated to $(\alpha', \beta')$ has lenght $\geq n$ and  $a_i = a_i'$, $b_i = b_i'$, for $i = 0, \ldots, n$.
\end{proposition}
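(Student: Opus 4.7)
The plan is to proceed by induction on $i=0,1,\ldots,n$, showing simultaneously that $a_i=a_i'$, $b_i=b_i'$, and that the iterates at step $i$ satisfy
$$\mu_i:=\min\{\nu_p(\alpha_i-\alpha_i'),\,\nu_p(\beta_i-\beta_i')\}\geq 2(K_n-K_i)+1.$$
Setting $K_0=0$, the hypothesis $\max\{|\alpha-\alpha'|_p,|\beta-\beta'|_p\}<p^{-2K_n}$ translates into $\mu_0\geq 2K_n+1$, which is the base case. The underlying principle, which makes the induction trigger, is that the Browkin $s$-function is constant on $p\ZZ_p$-cosets: if $\nu_p(\gamma-\gamma')\geq 1$ then $s(\gamma)=s(\gamma')$. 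Since every $\mu_i$ in the claimed lower bound is $\geq 1$, this immediately gives $a_i=a_i'$ and $b_i=b_i'$ once the inductive bound on $\mu_i$ is established.

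For the inductive step, assume the statement for index $i<n$. The crucial preliminary observation is that $\nu_p(\beta_i-b_i)=k_{i+1}$ (from $\alpha_{i+1}=1/(\beta_i-b_i)$ and $|\alpha_{i+1}|_p=|a_{i+1}|_p$), while $\mu_i\geq 2(K_n-K_i)+1\geq 2k_{i+1}+1>k_{i+1}$. Hence by ultrametric triangle inequality $\nu_p(\beta_i'-b_i)=k_{i+1}$ as well, so the algorithm does not terminate prematurely for $(\alpha',\beta')$. Using $a_i=a_i'$, $b_i=b_i'$, the defining formulas give
$$\alpha_{i+1}-\alpha_{i+1}'=\frac{\beta_i'-\beta_i}{(\beta_i-b_i)(\beta_i'-b_i)},\qquad \beta_{i+1}-\beta_{i+1}'=\frac{(\alpha_i-a_i)(\beta_i'-\beta_i)+(\alpha_i-\alpha_i')(\beta_i-b_i)}{(\beta_i-b_i)(\beta_i'-b_i)},$$
where the second identity is obtained by the standard add-and-subtract trick in the numerator. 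Plugging in $\nu_p(\alpha_i-a_i)\geq 1$ and $\nu_p(\beta_i-b_i)=\nu_p(\beta_i'-b_i)=k_{i+1}$ yields
$$\nu_p(\alpha_{i+1}-\alpha_{i+1}')=\nu_p(\beta_i-\beta_i')-2k_{i+1}\geq \mu_i-2k_{i+1},$$
$$\nu_p(\beta_{i+1}-\beta_{i+1}')\geq \min\{1+\nu_p(\beta_i-\beta_i'),\,\nu_p(\alpha_i-\alpha_i')+k_{i+1}\}-2k_{i+1}\geq \mu_i-2k_{i+1}.$$
Hence $\mu_{i+1}\geq \mu_i-2k_{i+1}\geq 2(K_n-K_{i+1})+1$, closing the induction; since $\mu_{i+1}\geq 1$, Browkin's $s$-function then outputs $a_{i+1}=a_{i+1}'$ and $b_{i+1}=b_{i+1}'$.

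The only delicate point is ensuring that we stay in the ``good'' regime $\mu_i>k_{i+1}$ at every step, because this is what prevents the cancellation in $\beta_i'-b_i=(\beta_i-b_i)+(\beta_i'-\beta_i)$ from lowering its valuation below $k_{i+1}$ and thereby wrecking the control on the denominators. The choice of the exponent $2K_n$ in the hypothesis is exactly dictated by this: the ``budget'' of $2K_n+1$ in $\mu_0$ must still cover $2k_{i+1}$ per step and leave at least $1$ after the last step, which is precisely the arithmetic $2K_n+1-2K_n\geq 1$. Once the induction reaches $i=n$, we have $\mu_n\geq 1$, so $a_n=a_n'$, $b_n=b_n'$, and the computation $\nu_p(\beta_n'-b_n)=k_{n+1}<\infty$ (when relevant) shows that the MCF of $(\alpha',\beta')$ does not stop before step $n$, concluding the proof.
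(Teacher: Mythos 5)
Your proof is correct and is essentially the same argument as the paper's: the paper organizes it as an induction on the length $n$, applying the inductive hypothesis to the shifted pair $(\alpha_1,\beta_1)$ after verifying $|\alpha_1-\alpha_1'|_p=|a_1|_p^2|\beta-\beta'|_p$ and the analogous bound for $\beta_1-\beta_1'$ via the same add-and-subtract identity you use, while you unroll this into a forward induction on the step index with the explicit invariant $\mu_i\geq 2(K_n-K_i)+1$. The key ingredients (local constancy of the Browkin $s$-function on unit balls, the ultrametric equality $\nu_p(\beta_i'-b_i)=k_{i+1}$, and the loss of exactly $2k_{i+1}$ in valuation per step) are identical in both versions.
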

\begin{proof}
	Notice that $ \cfrac{1}{p^{2K_n}}= \cfrac{1}{|C_{n}|^2_p}$.   We prove the thesis by induction on $n$. The claim is certainly true for $n=0$, since in general
	$$|x-y|_p < 1 \Leftrightarrow s(x) = s(y).$$
	Suppose now $n\geq 1$, and $\max\{|\alpha - \alpha'|_p, |\beta - \beta'|_p\} < \cfrac{1}{|C_{n+1}|^2_p}$. By the case $n=0$ we have $a'_0=a_0$, $b'_0=b_0$. Moreover we observe that our hypothesis implies $|\beta-\beta'|<\frac 1 {|a_1|_p}=|\beta-b_0|_p$.  By the properties of the non-archimedean norm, we have
	$$\frac 1{|a'_1|_p} =|\beta'-b_0|_p=\max\{|\beta'-\beta|_p,|\beta-b_0|_p\}=|\beta-b_0|_p=\frac 1{|a_1|_p},$$
	so that $|a_1|_p=|a'_1|_p$. We have
	\begin{equation} \label{eq:diciassette}
	|\alpha_1-\alpha'_1| = \left| \frac 1 {\beta-b_0} -\frac 1 {\beta'-b_0}\right |_p
	=|a_1|^2_p |\beta-\beta'|_p <\prod_{j=2}^{n+1} \frac 1 {|a_j|_p^2} = \cfrac{1}{|C_n^{(1)}|^2_p} 
	\end{equation} 
	where $C_n^{(1)}$ is the $n$--th denominator of the convergents of the MCF expansion of $(\alpha_1, \beta_1)$. Moreover,
	\begin{equation*} 
	|\beta_1-\beta'_1|_p = \left| \alpha_1 (\alpha-a_0) -\alpha'_1(\alpha'-a_0)\right |_p = |(\alpha-a_0)(\alpha_1-\alpha'_1)+\alpha'_1(\alpha-\alpha')|_p \leq \end{equation*}
	\begin{equation} \label{eq:diciotto}
	\leq\max\{|(\alpha-a_0)(\alpha_1-\alpha'_1)|,|a_1|_p|(\alpha-\alpha')|_p \} < \cfrac{1}{|C_n^{(1)}|^2_p}.
	\end{equation}
	Thus, by inductive hypothesis we have $a_i=a'_i, b_i=b'_i$ for $i=1,\ldots, n+1$.
\end{proof}
Unfortunately,  in general the pair $(Q_n^\alpha,Q_n^\beta)$ does not lie in the $p$-adic ball centered in $(\alpha,\beta) $ and having radius $\cfrac 1{p^{2K_n}}$, as Example \ref{ex:cattivaappr} shows. The next proposition gives a constructive sufficient condition ensuring this property.

\begin{proposition} Consider an infinite MCF such that  $k_{n+1}> k_n+k_{n-1}$ and $h_n> k_{n-1}$ for $n\geq 2$. Then for every $n\in\NN$, $\max\{|\alpha - Q_n^\alpha|_p, |\beta - Q_n^\beta|_p\} < \cfrac{1}{p^{2K_n}}$.
			\end{proposition}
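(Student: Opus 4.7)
The plan is to reformulate the claim in terms of $p$-adic valuations: since $\nu_p(C_n) = -K_n$ and $\alpha - Q_n^\alpha = V_n^\alpha/C_n$, the desired inequality $|\alpha - Q_n^\alpha|_p < p^{-2K_n}$ is equivalent to $\nu_p(V_n^\alpha) > K_n$, and analogously for $\beta$. Since all $p$-adic valuations here are integers, it suffices to prove the stronger statement
$$\nu_p(V_n^\alpha)\geq K_n + 1, \qquad \nu_p(V_n^\beta)\geq K_n+1, \qquad n\geq 0.$$
I would prove this by induction on $n$, imitating and strengthening the scaling trick used in the proof of Proposition \ref{prop:costrsupera}.

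For the inductive step I would take the recurrence
$V_n = -\frac{\beta_{n+1}}{\alpha_{n+1}}V_{n-1} - \frac{1}{\alpha_{n+1}}V_{n-2}$
and divide by $p^{K_n}$, obtaining
$\frac{V_n}{p^{K_n}} = \mu_n\cdot\frac{V_{n-1}}{p^{K_{n-1}}} + \nu_n\cdot\frac{V_{n-2}}{p^{K_{n-2}}}$
with $\mu_n = -\frac{\beta_{n+1}}{\alpha_{n+1}p^{k_n}}$ and $\nu_n = -\frac{1}{\alpha_{n+1}p^{k_n+k_{n-1}}}$. The first hypothesis gives $\nu_p(\mu_n)\geq h_{n+1}-k_n\geq 1$ and the second gives $\nu_p(\nu_n) = k_{n+1}-k_n-k_{n-1}\geq 1$ (recall $\nu_p(\beta_{n+1}/\alpha_{n+1})\geq h_{n+1}$ whether or not $b_{n+1}=0$, using the second bullet of Section \ref{sec:def}). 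So both $\mu_n,\nu_n\in p\ZZ_p$, and if by induction $V_{n-1}/p^{K_{n-1}}$ and $V_{n-2}/p^{K_{n-2}}$ lie in $\ZZ_p$, then $V_n/p^{K_n}\in p\ZZ_p$, i.e.\ $\nu_p(V_n)\geq K_n+1$, which is what we need (and a fortiori suffices for the induction to continue). Combining this with $\nu_p(C_n) = -K_n$ yields $\nu_p(\alpha - Q_n^\alpha)\geq 2K_n+1 > 2K_n$, proving the proposition.

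The main obstacle is the base of the induction. The initial values $V_{-1}^\alpha = -1$, $V_{-1}^\beta = 0$, $V_{-2}^\alpha = 0$, $V_{-2}^\beta = -1$ already sit in $\ZZ_p$, so once the recurrence-scaling argument applies, the induction carries through cleanly; but the hypotheses $h_n > k_{n-1}$ and $k_{n+1} > k_n+k_{n-1}$ as stated are quantified over $n\geq 2$, so the steps producing $V_0$ and $V_1$ need to be treated by hand. For $V_0$ one uses directly that $V_0^\alpha = \alpha - a_0$ and $V_0^\beta = \beta - b_0$ have valuation at least $1 = K_0+1$ by definition of the Browkin function. For $V_1$ the delicate point is to show that $k_2 > k_1$ (equivalently $\nu_p(\nu_1)\geq 1$): this follows from $h_2 > k_1$ together with the automatic bound $h_2 \leq k_2$, which in turn comes from $b_2\in\ZZ[1/p]\cap(-p/2,p/2)$ forcing $\nu_p(b_2)\leq 0$ (and hence $h_2=\nu_p(b_2)-\nu_p(a_2)\leq k_2$) whenever $b_2\neq 0$, the case $b_2=0$ being subsumed by the same sort of direct check via $\nu_p(\tilde\alpha_1)\geq h_2>k_1$. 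Once these two base cases are in place, the inductive argument above handles all $n\geq 2$ uniformly.
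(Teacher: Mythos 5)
Your approach is the same as the paper's: the paper proves this proposition in one line by invoking Proposition \ref{prop:costrsupera} (take $\ell_0=1$ and $\ell_n=k_n$ for $n\geq 1$, so that $f(n)=K_n+1$), and your scaling argument is exactly the unwound version of that reduction, starting from the correct reformulation $\nu_p(V_n^\alpha),\nu_p(V_n^\beta)\geq K_n+1$. You are in fact more careful than the paper in isolating where the hypotheses, which are quantified only over $n\geq 2$, actually enter; your inductive step for $n\geq 2$ and your treatment of $V_0$ are fine.

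However, your resolution of the $n=1$ step in the subcase $b_2=0$ does not work, and this is a genuine gap --- one which the paper's one-line proof shares, since applying Proposition \ref{prop:costrsupera} with any admissible $(\ell_n)$ also forces $k_2\geq \ell_1+\ell_0\geq k_1+1$. When $b_2=0$ one has $h_2=\nu_p(b_2/a_2)=+\infty$, so the hypothesis $h_2>k_1$ carries no information whatsoever about $k_2$ (and, for the same reason, your parenthetical claim that $\nu_p(\beta_{n+1}/\alpha_{n+1})\geq h_{n+1}$ ``whether or not $b_{n+1}=0$'' is not right as stated). Moreover the problematic term is not the one involving $\beta_2/\alpha_2$: from $V_1^\alpha=-\frac{\beta_2}{\alpha_2}(\alpha-a_0)+\frac{1}{\alpha_2}$ together with $\nu_p(\beta_2/\alpha_2)\geq k_2+1$ and $\nu_p(\alpha-a_0)\geq 1$, one gets $\nu_p(V_1^\alpha)=k_2$ \emph{exactly}, so the required bound $\nu_p(V_1^\alpha)\geq K_1+1=k_1+1$ genuinely needs $k_2>k_1$, which the stated hypotheses do not supply when $b_2=0$. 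Either one adds $k_2>k_1$ to the hypotheses (equivalently, extends $k_{n+1}>k_n+k_{n-1}$ to $n=1$ with the convention $k_0=0$), or the conclusion can fail at $n=1$; your ``direct check'' for $b_2=0$ cannot be completed as written.
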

	\begin{proof}
	It is a consequence of Proposition \ref{prop:costrsupera} .
	\end{proof}

\subsection{Diophantine study} In this section we want to relate the rate of approximation of the convergents of a $p$-adic MCF to the euclidean size of its numerators and denominators. First, we give a bound on this size.

\begin{lemma}\label{lem:lemmaapprox2} Let $(a_n)_{n\in \NN} $ be a sequence of real numbers, such that there exists $m\in\NN$,  $c_0,\ldots, c_m$ positive real numbers such that $c_m>0$ and
$$|a_{n+m+1}|_\infty  < c_{m}|a_{n+m}|_\infty +c_{m-1}|a_{n+m-1}|_\infty+\ldots + c_{0}|a_n|_\infty. $$
Let $\tilde x$ be the (unique, by the cartesian rule of signs) positive real root of the polynomial
\begin{equation}\label{eq:polinomio} f(X)=X^{m+1}-c_mX^m-\ldots - c_1X-c_0 \end{equation}
and let $M\geq \max\{|a_0|_\infty, \frac{|a_1|_\infty}{\tilde x},\ldots,\frac{|a_m|_\infty}{\tilde x^m}\}.$ Then $ |a_n|_\infty \leq M\tilde x^n$  for every $n\in\NN$.
\end{lemma}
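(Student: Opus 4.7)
The plan is to prove the bound $|a_n|_\infty \leq M\tilde x^n$ by strong induction on $n$, with the case $f(\tilde x)=0$ feeding in exactly at the inductive step. The existence and uniqueness of the positive root $\tilde x$ is a straightforward application of Descartes' rule of signs, since the coefficients of $f(X)$ in \eqref{eq:polinomio} exhibit a single sign change (the leading $+1$, followed by the nonpositive $-c_m,\ldots,-c_0$ with $c_m>0$); this justifies the phrase in the statement and in particular gives $\tilde x>0$.

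First I would dispatch the base cases $n=0,1,\ldots,m$. These hold immediately from the choice of $M$, since by definition $M\geq |a_j|_\infty/\tilde x^j$ for $j=0,\ldots,m$, so $|a_j|_\infty \leq M\tilde x^j$.

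For the inductive step, fix $n\geq 0$ and assume $|a_{n+j}|_\infty \leq M\tilde x^{n+j}$ for $j=0,\ldots,m$. Using the recursive inequality in the hypothesis and the inductive bounds,
\[
|a_{n+m+1}|_\infty < \sum_{j=0}^{m} c_j |a_{n+j}|_\infty \leq M\sum_{j=0}^{m} c_j \tilde x^{n+j} = M\tilde x^n \sum_{j=0}^{m} c_j \tilde x^{j}.
\]
Since $\tilde x$ is a root of $f$, we have $\tilde x^{m+1} = \sum_{j=0}^{m} c_j \tilde x^j$, so the right hand side equals $M\tilde x^{n+m+1}$, completing the induction.

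No step here is really an obstacle; the only point to be slightly careful about is that the induction is strong (it uses the last $m+1$ values), which is why the base of the induction must cover all of $n=0,\ldots,m$ rather than just $n=0$. The rest is pure bookkeeping; the clever ingredient of the lemma is really just the substitution of the characteristic-like equation $\tilde x^{m+1}=c_m\tilde x^m+\cdots+c_0$ at the end of the inductive step, which forces exactly the geometric growth claimed.
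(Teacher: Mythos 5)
Your proof is correct and is exactly the argument the paper has in mind: the paper dismisses this lemma with ``the proof is straightforward by induction on $n$,'' and your strong induction---base cases $n=0,\dots,m$ from the choice of $M$, then the substitution $\tilde x^{m+1}=c_m\tilde x^m+\cdots+c_0$ in the inductive step---is the intended (and complete) filling-in of that induction.
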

\begin{proof} The proof is straightforward by induction on $n$. \end{proof}

Notice that $f(0)=-c_0< 0$, so that $\tilde x>0$, more precisely $$\tilde x =c_m +\frac {c_{m-1}}{\tilde x} + \frac {c_{m-2}}{\tilde x^2}+\ldots +\frac {c_{0}}{\tilde x^m}$$
which implies $c_m<\tilde x$. Put $C=\sum_{i=0}^m |c_i|_\infty$, if $C<1$, then $f(1)=1-C>0$, so that $0<\tilde x <1$, and we can conclude that $c_m<\tilde x<1.$ In the following, $\tilde x$ will be the real root of the polynomial
$$X^3-\frac 1 2  X^2 -\frac  1 {2p} X -\frac 1 {p^3}$$
so that $\frac 1 2 <\tilde x< 1$ and $\lim_{p \rightarrow \infty} \tilde x = \frac{1}{2}$ in $\mathbb R$. Notice that $p\tilde x$ is the real root of the polynomial
$$X^3-\frac p 2 X^2-\frac p 2 X-1.$$
By specializing to the case of $p$-adic $MCF$ we obtain the following proposition.
\begin{proposition}\label{cor:stimaABC2} 
Given the sequences $(A_n)$, $(B_n)$, $(C_n)$ as in \eqref{eq:seq-ndc}, there exists $H>0$ such that
$$\max\{|A_n|_\infty,|B_n|_\infty, |C_n|_\infty\}   \leq H(p \tilde x)^n,$$
for every $n\in\NN$ and in particular 
$$\max\{|A_n|_\infty,|B_n|_\infty, |C_n|_\infty\}  =o(p^n).$$
\end{proposition}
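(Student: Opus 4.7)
The plan is to apply Lemma \ref{lem:lemmaapprox2} separately to each of the three sequences $(|A_n|_\infty), (|B_n|_\infty), (|C_n|_\infty)$, using the recurrence \eqref{seq-ABC} together with the sup-norm bounds on $a_n$ and $b_n$ coming from the range $\mathcal{Y}$ of the Browkin $s$-function.

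First I would observe that, by definition, $a_n = s(\alpha_n)$ and $b_n = s(\beta_n)$ lie in $\mathcal{Y}=\ZZ[1/p]\cap(-p/2,p/2)$, so that $|a_n|_\infty<p/2$ and $|b_n|_\infty<p/2$. Applying the triangle inequality to the recurrence $A_n = a_n A_{n-1}+b_n A_{n-2}+A_{n-3}$ gives
$$|A_n|_\infty \leq |a_n|_\infty|A_{n-1}|_\infty + |b_n|_\infty|A_{n-2}|_\infty + |A_{n-3}|_\infty \leq \frac{p}{2}|A_{n-1}|_\infty + \frac{p}{2}|A_{n-2}|_\infty + |A_{n-3}|_\infty,$$
and the same bound holds verbatim for $|B_n|_\infty$ and $|C_n|_\infty$. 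This is exactly the recursive inequality hypothesized in Lemma \ref{lem:lemmaapprox2} for $m=2$, $c_2=c_1=p/2$, $c_0=1$.

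Next I would invoke the explicit computation already recorded in the excerpt: the polynomial $f(X)=X^3-\frac{p}{2}X^2-\frac{p}{2}X-1$ has $p\tilde x$ as its unique positive real root. Hence by Lemma \ref{lem:lemmaapprox2}, picking $M_A,M_B,M_C$ large enough to dominate the initial values (e.g.\ $M_A\geq\max\{|A_0|_\infty,|A_1|_\infty/(p\tilde x),|A_2|_\infty/(p\tilde x)^2\}$, and similarly for $M_B,M_C$), we obtain $|A_n|_\infty\leq M_A(p\tilde x)^n$, $|B_n|_\infty\leq M_B(p\tilde x)^n$, $|C_n|_\infty\leq M_C(p\tilde x)^n$ for all $n\in\NN$. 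Setting $H=\max\{M_A,M_B,M_C\}$ gives the stated bound. The $o(p^n)$ statement then follows immediately from the fact, noted before the proposition, that $\tilde x<1$: indeed $(p\tilde x)^n/p^n=\tilde x^n\to 0$.

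The only subtle point I expect to encounter is that Lemma \ref{lem:lemmaapprox2} is stated with \emph{strict} inequality, whereas our bound $|a_n|_\infty<p/2$ produces strict inequality in the recurrence only when at least one of $|A_{n-1}|_\infty,|A_{n-2}|_\infty$ is positive; the degenerate case where two consecutive entries vanish is not covered. I would handle this either by performing the induction directly (the inductive step $M(p\tilde x)^{n-3}[\tfrac{p}{2}(p\tilde x)^2+\tfrac{p}{2}(p\tilde x)+1]=M(p\tilde x)^n$ propagates the non-strict bound because $p\tilde x$ is a root of $f$), or by applying the lemma with $c_0 = 1+\varepsilon$ and then absorbing $\varepsilon$ into the constant $H$. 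Either fix is routine; aside from this bookkeeping, the proof reduces to a clean invocation of the preceding lemma.
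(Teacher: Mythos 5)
Your proposal is correct and is exactly the argument the paper intends: the paper gives no explicit proof, only the remark that the proposition follows ``by specializing'' Lemma \ref{lem:lemmaapprox2} to the recurrence \eqref{seq-ABC} with $|a_n|_\infty,|b_n|_\infty<\frac p2$, which is precisely your application with $c_2=c_1=\frac p2$, $c_0=1$ and the root $p\tilde x$ of $X^3-\frac p2X^2-\frac p2X-1$. Your remark on the strict-versus-nonstrict inequality is a fair (and easily fixed) bit of bookkeeping that the paper glosses over entirely.
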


\begin{proposition}\label{prop:sifermaconpassi}
	Let $\boldsymbol{\alpha}=(\alpha,\beta)\in\QQ^2$, and write
	$$\alpha=\frac{x_0}{z_0}, \quad\quad \beta=\frac{y_0}{z_0}$$
	with $z_0\in\ZZ$ and $x_0,y_0\in\ZZ\left[\frac 1 p\right]$.\\
	The $p$-adic Jacobi-Perron algorithm applied to  $\boldsymbol{\alpha}$ stops in a number of steps bounded by $-\frac{\log(M)}{\log(\tilde x)} $ where
	\begin{align}
	M & =\max\left \{ |z_0|_\infty, \frac 1p |y_0|_\infty+\frac 1 2 |z_0|_\infty, \frac 1 {p^2}|x_0|_\infty+\frac 1 {2p} |y_0|_\infty+\left( \frac 1{2p}+\frac 1 4\right)  |z_0|_\infty\right\}\nonumber \\
	& \leq \max\left \{ |z_0|_\infty, \frac 12 (|y_0|_\infty+|z_0|_\infty), \frac 1 4 (|x_0|_\infty +|y_0|_\infty+|z_0|_\infty)\right \}\label{eq:stima2}\\
	&\leq  |x_0|_\infty+|y_0|_\infty+|z_0|_\infty
	\label{eq:stima2bis}\\
	&\leq 3\max\{ |x_0|_\infty,|y_0|_\infty,|z_0|_\infty
	\}\label{eq:stima3}.
	\end{align}
	
\end{proposition}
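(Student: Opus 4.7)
The plan is to run the $p$-adic Jacobi--Perron algorithm symbolically on the rational triple $(x_0,y_0,z_0)$, to exhibit a renormalization of the iterated ``denominator'' that is forced to be an integer, and then to show that its Euclidean size decays geometrically at rate $\tilde x$. Once that size drops below $1$, the integer must vanish, and this is precisely the termination condition for the algorithm.

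Concretely, set $X_0:=x_0$, $Y_0:=y_0$, $Z_0:=z_0$ and iterate
$$X_{n+1}=Z_n,\qquad Y_{n+1}=X_n-a_nZ_n,\qquad Z_{n+1}=Y_n-b_nZ_n.$$
A direct inspection of \eqref{eq:JP} shows $\alpha_n=X_n/Z_n$ and $\beta_n=Y_n/Z_n$ at every step, so the algorithm stops at step $n$ exactly when $Z_{n+1}=0$. The defining property of the Browkin $s$-function gives $\nu_p(\alpha_n-a_n)\geq 1$ and $\nu_p(\beta_n-b_n)\geq 1$; clearing denominators one gets $\nu_p(Y_{n+1}),\nu_p(Z_{n+1})\geq \nu_p(Z_n)+1$, and iterating, $\nu_p(Z_n)\geq n+\nu_p(z_0)\geq n$. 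Thus $\hat Z_n:=Z_n/p^n\in\ZZ$, so $|\hat Z_n|_\infty<1$ immediately forces $\hat Z_n=0$.

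Next I would derive a three-term linear recurrence for $(|\hat Z_n|_\infty)$. From $|a_n|_\infty,|b_n|_\infty<p/2$ and the triangle inequality, $|Z_{n+1}|_\infty\leq |Y_n|_\infty+(p/2)|Z_n|_\infty$, and (using $X_{n-1}=Z_{n-2}$) $|Y_n|_\infty\leq |Z_{n-2}|_\infty+(p/2)|Z_{n-1}|_\infty$. Combining these and dividing by $p^{n+1}$ produces
$$|\hat Z_{n+1}|_\infty\leq \tfrac 12 |\hat Z_n|_\infty+\tfrac 1{2p}|\hat Z_{n-1}|_\infty+\tfrac 1{p^3}|\hat Z_{n-2}|_\infty,$$
which is exactly the recurrence governed by the polynomial \eqref{eq:polinomio}. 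An explicit expansion of $Z_1$ and $Z_2$ in terms of $x_0,y_0,z_0$, using again the bound $p/2$ on the partial quotients, supplies the three explicit estimates appearing inside the first $\max$ defining $M$. Lemma \ref{lem:lemmaapprox2} then yields $|\hat Z_n|_\infty\leq M\tilde x^n$. Imposing $M\tilde x^n<1$ gives the step-count bound $n\leq -\log(M)/\log(\tilde x)$, and the looser chain \eqref{eq:stima2}--\eqref{eq:stima3} follows from elementary estimates such as $1/p\leq 1/2$ and $1/p^2,\,1/(2p)\leq 1/4$.

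The main delicate point will be to match the coefficients of the recurrence exactly to those of \eqref{eq:polinomio}: it is precisely the two-step lag $X_{n-1}=Z_{n-2}$ that produces the $1/p^3$ coefficient (rather than something like $1/p^2$, which would arise from a one-step bound) and hence makes $\tilde x$ -- and not some larger root -- the correct geometric rate, consistent with the expected archimedean limit $\tilde x\to 1/2$ as $p\to\infty$.
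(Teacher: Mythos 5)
Your proposal is correct and follows essentially the same route as the paper: the same symbolic iteration $(x_{n+1},y_{n+1},z_{n+1})=(z_n,\,x_n-a_nz_n,\,y_n-b_nz_n)$, the same observation that $z_n/p^n\in\ZZ$, the same three-term recurrence $z_{n+1}=z_{n-2}-a_{n-1}z_{n-1}-b_nz_n$ fed into Lemma \ref{lem:lemmaapprox2}, and the same explicit expansion of $z_1,z_2$ to produce $M$. The coefficient bookkeeping (including the $1/p^3$ term coming from the two-step lag) matches the paper exactly.
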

\begin{proof} The proof is the same as \cite[Theorem 5]{MT}, but we take into account the number of steps. The $p$-adic JP algorithm produces the sequence of complete quotients $(\boldsymbol{\alpha_n})_{n\geq 0}$, where $$\boldsymbol{\alpha_n}=(\alpha_n,\beta_n)\in\QQ^m, \quad\quad  \alpha_n=\frac{x_n}{z_n}, \quad\quad \beta_n=\frac{y_n}{z_n},$$
	and $x_n,y_n,z_n$ are generated by the following rules:
	$$\left\{ \begin{array}{lll}
	x_n &=& a_nz_n+y_{n+1}\\
	y_n &=& b_nz_n+z_{n+1}\\
	z_n &=& x_{n+1}
	\end{array}
	\right.$$
	with $a_n,b_n\in\mathcal{Y}$, $|y_{n+1}|_p,|z_{n+1}|_p<|z_n|_p$.  Then $\frac {z_n}{p^n}\in\ZZ$ and from the formula
	$$z_{n+1}=z_{n-2}-a_{n-1}z_{n-1}-b_nz_n$$
	we get by Lemma \ref{lem:lemmaapprox2}
	\begin{align*}
	\frac {|z_{n+1}|_\infty } {p^{n+1}} &< \frac 1 2 \frac {|z_{n}|_\infty } {p^{n}} + \frac 1 {2p} \frac {|z_{n-1}|_\infty } {p^{n-1}}+\frac 1 {p^3} \frac {|z_{n-2}|_\infty } {p^{n-2}}\\
	& < M'\tilde x^n \end{align*} where $$M'=\max\left\{\frac{|z_{2}|_\infty } {p^{2}},\frac {|z_{1}|_\infty } {p}, {|z_{0}|_\infty }\right\}.$$
	Then $z_{n+1}=0$ when  $\tilde x^n \leq \frac 1 {M'}$.
	We have
	\begin{align*}
	\frac{|z_{1}|_\infty} {p}&= \frac 1 p |y_0-b_0z_0|_\infty\\
	& < \frac 1 p {|y_0|_\infty} + \frac 1 2  |z_0|_\infty\\
	\frac{|z_{2}|_\infty} {p^2}&= \frac 1 {p^2} |y_1-b_1z_1|_\infty \\
	& = \frac 1 {p^2} |(x_0-a_0z_0)-b_1(y_0-b_0z_0)|_\infty\\
	&< \frac 1 {p^2} {|x_0|_\infty}+\frac 1 {2p}{|y_0|_\infty} +\left (\frac 1 {2p} + \frac 1 4 \right) {|z_0|_\infty}.
	\end{align*}
	Therefore $M'<M$, so that 
	\begin{align}\label{eq:passifermata} z_{n+1}=0 &\hbox{ for } \tilde x^n \leq \frac 1 {M}\\
	&\hbox{ that is for } n \geq  -\frac{\log(M)}{\log(\tilde x)}.\nonumber
	\end{align} 
	Inequalities \eqref{eq:stima2} and \eqref{eq:stima3} are straightforward.
\end{proof}

\begin{corollary}\label{cor:cordiofuno}
	Let $t,u\in\ZZ[\frac 1 p]$ and $v\in\ZZ$ such that the $p$-adic MCF for $(\frac t v, \frac u v)$ has lenght $\geq n+1$.  Then 
	$$\max\{|t|_\infty, |u|_\infty, |v|_\infty\} \geq \frac 1 {3\tilde x^n}.$$
\end{corollary}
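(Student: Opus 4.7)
The plan is to read off this statement as a contrapositive of Proposition \ref{prop:sifermaconpassi}, after identifying $t,u,v$ with the triple $(x_0,y_0,z_0)$ used there. Concretely, we put $\alpha = t/v$, $\beta = u/v$, so that $x_0=t$, $y_0=u$, $z_0=v$ in the notation of that proposition.

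First I would recall from the proof of Proposition \ref{prop:sifermaconpassi} that the ``length'' of the $p$-adic MCF is controlled by \eqref{eq:passifermata}: the algorithm necessarily produces $z_{n+1}=0$ (and hence terminates at or before that step) as soon as $\tilde x^n\leq 1/M$. Taking the contrapositive, the hypothesis that the MCF has length $\geq n+1$ forces $\tilde x^n>1/M$, i.e.\ $M>1/\tilde x^n$.

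Next I would use estimate \eqref{eq:stima3} from the statement of Proposition \ref{prop:sifermaconpassi}, which bounds
\[ M \;\leq\; 3\max\{|x_0|_\infty,|y_0|_\infty,|z_0|_\infty\} \;=\; 3\max\{|t|_\infty,|u|_\infty,|v|_\infty\}. \]
Chaining the two inequalities yields
\[ 3\max\{|t|_\infty,|u|_\infty,|v|_\infty\} \;\geq\; M \;>\; \frac{1}{\tilde x^n}, \]
hence $\max\{|t|_\infty,|u|_\infty,|v|_\infty\}>\tfrac{1}{3\tilde x^n}$, which is stronger than the stated conclusion.

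The only delicate point, and the one I expect to be the main obstacle in writing the proof cleanly, is matching the length convention with the indexing of \eqref{eq:passifermata}: I need to make sure that ``length of the MCF $\geq n+1$'' corresponds exactly to the statement ``$z_{n+1}\neq 0$'', so that the contrapositive of \eqref{eq:passifermata} applies with the same index $n$. Once this is clarified, the rest is a one-line substitution.
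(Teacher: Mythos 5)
Your proposal is correct and is essentially identical to the paper's own proof: the authors likewise observe that length $\geq n+1$ means $z_{n+1}\neq 0$, then invoke the contrapositive of \eqref{eq:passifermata} together with the bound \eqref{eq:stima3}. Your attention to the indexing convention (length $\geq n+1$ corresponding to $z_{n+1}\neq 0$) is exactly the point the paper passes over silently, and your strict inequality is a harmless strengthening of the stated bound.
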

\begin{proof}
	With the notation of the proof of Proposition \ref{prop:sifermaconpassi}, we have $z_{n+1}\not=0$, then the claim follows from by \eqref{eq:stima3} and \eqref{eq:passifermata}.
\end{proof}
\begin{corollary}\label{cor:cordiofdue} Let $(\alpha,\beta)\in\QQ_p^2$ be a pair having a $p$-adic MCF expansion of lenght $\geq  n+1$. Then
	$$\max\{|A_n|_\infty, |B_n|_\infty, |C_n|_\infty\} \geq \frac 1 {3 p^{K_n} \tilde x^n}.$$
\end{corollary}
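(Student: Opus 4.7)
My plan is to reduce the statement directly to the previous Corollary \ref{cor:cordiofuno} by rationalizing the $n$-th convergent appropriately. The convergent pair $(Q_n^\alpha, Q_n^\beta) = (A_n/C_n, B_n/C_n)$ is rational, but its denominator $C_n$ lives in $\ZZ[1/p]$ rather than $\ZZ$, so we need to clear the $p$-power denominator before Corollary \ref{cor:cordiofuno} can be applied.

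First I would set
$$t = p^{K_n} A_n, \qquad u = p^{K_n} B_n, \qquad v = p^{K_n} C_n.$$
The recurrence \eqref{seq-ABC} together with $a_k, b_k \in \mathcal{Y} \subset \ZZ[1/p]$ shows that $A_n, B_n, C_n \in \ZZ[1/p]$, so $t, u, v \in \ZZ[1/p]$. The key point is that $v \in \ZZ$: this is because $\nu_p(C_n) = -K_n$ (recalled at the end of Section \ref{sec:def}), so $\nu_p(v) = 0$, and a $p$-adically integral element of $\ZZ[1/p]$ lies in $\ZZ$.

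Next, by construction $t/v = A_n/C_n = Q_n^\alpha$ and $u/v = B_n/C_n = Q_n^\beta$, and the $p$-adic MCF expansion of the pair $(Q_n^\alpha, Q_n^\beta)$ is $[(a_0,\ldots,a_n),(b_0,\ldots,b_n)]$, which has length $n+1 \geq n+1$. Thus the hypotheses of Corollary \ref{cor:cordiofuno} are met, and it yields
$$\max\{|t|_\infty, |u|_\infty, |v|_\infty\} \geq \frac{1}{3 \tilde x^n}.$$
Dividing through by $p^{K_n}$ gives the desired inequality
$$\max\{|A_n|_\infty, |B_n|_\infty, |C_n|_\infty\} \geq \frac{1}{3 p^{K_n} \tilde x^n}.$$

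There is no real obstacle here; the only subtlety is the verification that $v \in \ZZ$ (rather than merely $\ZZ[1/p]$), which is precisely where the bookkeeping of the exponent $K_n$ enters the final bound. All the work has already been done in Corollary \ref{cor:cordiofuno} and Proposition \ref{prop:sifermaconpassi}, so this corollary is essentially a translation statement.
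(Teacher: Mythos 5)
Your proposal is correct and is essentially identical to the paper's own (one-line) proof: the paper also sets $t=p^{K_n}A_n$, $u=p^{K_n}B_n$, $v=p^{K_n}C_n$ and invokes Corollary \ref{cor:cordiofuno}. You merely spell out the details (why $v\in\ZZ$ via $\nu_p(C_n)=-K_n$, and why the expansion of $(Q_n^\alpha,Q_n^\beta)$ has length $\geq n+1$) that the paper leaves implicit.
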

\begin{proof}
	If we set $t=p^{K_n}A_n, u=p^{K_n}B_n, v=p^{K_n}C_n$, then the hypothesis of Corollary \ref{cor:cordiofuno} is fulfilled. 
\end{proof}
The following theorem establishes an explicit lower bound for the euclidean lenght of a pair of rational numbers which is a \lq\lq good approximation\rq\rq of a $p$-adic pair w.r.t the corresponding $K_n$.
\begin{theorem}\label{cor:cordioftre}
Let $(\alpha, \beta)\in \QQ_p^2$ be a pair having a $p$-adic MCF expansion of  lenght $\geq n+1$. Let  $(\frac tv, \frac u v)\in\QQ^2$ with $t,u\in\ZZ[\frac 1 p]$, $v\in \ZZ$ and assume $\max\left\{\left|\alpha - \frac tv\right|_p,\left |\beta - \frac uv\right|_p\right \} < \cfrac{1}{p^{2K_{n+1}}}$; then $\max\{|t|_\infty, |u|_\infty, |v|_\infty\}\geq  \frac 1 {3\tilde x^n}$.
\end{theorem}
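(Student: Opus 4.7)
The proof I propose is a direct combination of two results already established in the excerpt: Proposition \ref{prop:goodapprox} (perturbative stability of partial quotients) and Corollary \ref{cor:cordiofuno} (a lower bound on the Euclidean size of the data of a rational pair whose MCF is long).

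First, I would apply Proposition \ref{prop:goodapprox} with $(\alpha',\beta')=(\frac t v,\frac u v)$ and with the index $n+1$ in place of $n$. The hypothesis $\max\{|\alpha-\frac t v|_p,|\beta-\frac u v|_p\}<\frac{1}{p^{2K_{n+1}}}$ is exactly what is needed. The conclusion of that proposition guarantees that the $p$-adic MCF expansion of $(\frac t v,\frac u v)$ has length $\geq n+1$, and that its first $n+2$ pairs of partial quotients coincide with those of $(\alpha,\beta)$. Only the length statement is actually needed for what follows.

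Second, writing $t,u\in\ZZ[\frac 1 p]$ and $v\in\ZZ$ as given, we can invoke Corollary \ref{cor:cordiofuno} directly: since the $p$-adic MCF of $(\frac t v,\frac u v)$ has length $\geq n+1$, the algorithm has not yet terminated at step $n$, and the corollary yields
\[
\max\{|t|_\infty,|u|_\infty,|v|_\infty\}\geq \frac{1}{3\tilde x^{\,n}},
\]
which is the desired bound.

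There is essentially no obstacle here: the statement is a clean synthesis of the two prior results, and the only thing to watch is the index matching. Proposition \ref{prop:goodapprox} is stated with threshold $\frac{1}{p^{2K_n}}$ giving coincidence of MCFs up to index $n$, so to force coincidence (and in particular non-termination) at step $n+1$ one must use the hypothesis with $K_{n+1}$, exactly as the theorem assumes. Similarly, Corollary \ref{cor:cordiofuno} applied to a MCF of length $\geq n+1$ returns the factor $\tilde x^{\,n}$ in the denominator, which is what the statement claims.
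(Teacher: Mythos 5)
Your proof is correct and is essentially identical to the paper's own argument: apply Proposition \ref{prop:goodapprox} at index $n+1$ (which is exactly why the hypothesis uses $K_{n+1}$) to transfer the MCF expansion of $(\alpha,\beta)$ to $(\frac tv,\frac uv)$, then invoke Corollary \ref{cor:cordiofuno}. The index bookkeeping you spell out is the only subtlety, and you have it right.
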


\begin{proof} By Proposition \ref{prop:goodapprox} the pair $(\frac tv, \frac u v)$ has the same $MCF$ expansion as $(\alpha,\beta)$ up to $n+1$. The claim follows from Corollary \ref{cor:cordiofuno}.
\end{proof}

\section{Results related to algebraic dependence} \label{sec:alg}
\subsection{A $p$-adic Liouville type theorem on algebraic dependence}
The quality of rational approximations to real numbers is related to their algebraic dependence. Indeed, if it is possible to find infinitely many good approximations to a $m$--tuple of real numbers, then they are algebraically independent, see, e.g., \cite{Adams2}. Similar results also hold for the $p$--adic numbers \cite{Lao}. In the following theorem, we prove a new result of this kind and then we apply it to $p$--adic MCFs.

\begin{lemma}\label{lem:silly}
Let $C$ be a non-zero integer number, then
$$ |C|_p\geq \frac 1 {|C|_\infty}.$$
\end{lemma}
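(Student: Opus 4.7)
The plan is to unwind both sides using the $p$-adic valuation of $C$. Writing $k=\nu_p(C)$, which is a non-negative integer since $C\in\ZZ\setminus\{0\}$, we have $C=p^k m$ with $m\in\ZZ$ coprime to $p$, so in particular $|m|_\infty\geq 1$.

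The two sides then become completely explicit: $|C|_p=p^{-k}$ by definition of the $p$-adic norm, while $|C|_\infty=p^k|m|_\infty\geq p^k$ by the multiplicativity of the archimedean absolute value together with $|m|_\infty\geq 1$. Combining these gives
\[
|C|_p=\frac{1}{p^k}\geq \frac{1}{p^k|m|_\infty}=\frac{1}{|C|_\infty},
\]
which is the desired inequality. There is no real obstacle here; the only thing to check is that $k\geq 0$, which is automatic because $C$ is an integer (not merely an element of $\ZZ[\tfrac1p]$). The statement is essentially the product formula $|C|_p\cdot|C|_\infty\geq 1$ for nonzero integers, written in the form needed for its later application.
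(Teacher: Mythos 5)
Your proof is correct and follows essentially the same route as the paper's, which simply cites $p^{\nu_p(C)}\leq |C|_\infty$ and $|C|_p=p^{-\nu_p(C)}$; you merely make the first inequality explicit by factoring $C=p^k m$. No issues.
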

\begin{proof}
The result follows from $p^{\nu_p(C)}\leq |C|_\infty$ and $|C|_p= \frac 1 {p^{\nu_p(C)}}$.
\end{proof}
The following result is a variant of \cite[Theorem 3] {Lao}.
\begin{theorem}\label{teo:dipalg}
Given $\alpha,\beta\in\QQ_p\setminus\QQ$ such that $F(\alpha,\beta)=0$, for $F(X,Y)\in\mathbb{Z}[X,Y]$ non-zero polynomial with minimal total degree $D$, let $(t_n)_{n \geq 0}, (u_n)_{n \geq 0}, (v_n)_{n \geq 0}$ be sequences of integers such that $v_n\not=0$ for every $n\in\NN$ and
\begin{equation}\label{eq:convergono2} \lim_{n \rightarrow \infty}\frac{u_n}{v_n} = \alpha,\quad \lim_{n \rightarrow \infty} \frac{t_n}{v_n}=\beta \end{equation} 
in $\mathbb Q_p$. Consider $M_n =\max\{|t_n|_\infty,|u_n|_\infty,|v_n|_\infty\}$ and $U_n = \max\left\{\left | \alpha- \frac {t_n}{v_n}\right |_p, \left | \beta- \frac {u_n}{v_n}\right |_p \right\}$; if
\begin{equation}\label{eq:hypoUMD} \lim_{n \rightarrow \infty} U_n\cdot M_n^D  = 0 \end{equation}
in $\mathbb R$, then $F\left (\frac {t_n}{v_n},\frac {u_n}{v_n}\right )=0$ for $n\gg 0$.
\end{theorem}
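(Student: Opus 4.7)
The plan is a Liouville-type argument by contradiction: assume $F(t_n/v_n,u_n/v_n)\ne 0$ for infinitely many $n$, and derive from that a positive lower bound for $U_nM_n^D$, contradicting \eqref{eq:hypoUMD}. The engine is Lemma \ref{lem:silly}, which for any nonzero integer $G$ yields $|G|_p\cdot|G|_\infty\ge 1$, combined with a telescoping identity that extracts the small factors $\alpha-t_n/v_n$ and $\beta-u_n/v_n$ from $F(t_n/v_n,u_n/v_n)$.

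First I would exploit the vanishing $F(\alpha,\beta)=0$ to write
$$F(X,Y)-F(\alpha,\beta)=(X-\alpha)\,P(X,Y)+(Y-\beta)\,Q(Y),$$
where $P(X,Y)=(F(X,Y)-F(\alpha,Y))/(X-\alpha)$ and $Q(Y)=(F(\alpha,Y)-F(\alpha,\beta))/(Y-\beta)$ are polynomials of total degree $\le D-1$ with coefficients in $\ZZ[\alpha,\beta]$ (explicitly, using $X^i-\alpha^i=(X-\alpha)(X^{i-1}+\ldots+\alpha^{i-1})$ and the analogous identity for $Y$). Evaluating at $(X,Y)=(t_n/v_n,u_n/v_n)$ and applying the ultrametric inequality, the fact that convergent sequences are $p$-adically bounded ensures that $P,Q$ take $p$-adic values bounded by some constant $K_1$, yielding
$$\left|F(t_n/v_n,u_n/v_n)\right|_p \le K_1\,U_n\qquad\text{for all }n\gg 0.$$

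Next I would clear denominators. Since $F\in\ZZ[X,Y]$ has total degree $D$, the rational number
$$G_n:=v_n^D\,F\!\left(\tfrac{t_n}{v_n},\tfrac{u_n}{v_n}\right)=\sum_{i+j\le D}a_{ij}\,t_n^i u_n^j v_n^{D-i-j}$$
is a genuine integer, with $|G_n|_\infty\le K_2 M_n^D$ (where $K_2=\sum|a_{ij}|_\infty$) and $|G_n|_p\le|v_n|_p^D K_1 U_n\le K_1 U_n$. If $G_n\ne 0$, Lemma \ref{lem:silly} forces $|G_n|_p\ge 1/|G_n|_\infty$, hence $1\le K_1K_2\,U_nM_n^D$, contradicting \eqref{eq:hypoUMD} for $n$ large. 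Therefore $G_n=0$, equivalently $F(t_n/v_n,u_n/v_n)=0$, for $n\gg 0$.

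The main obstacle I anticipate is bookkeeping rather than conceptual: one must check that the quotient polynomials $P$ and $Q$ are well-defined in $\QQ_p[X,Y]$ and that the $p$-adic boundedness of the values $P(t_n/v_n,u_n/v_n)$ and $Q(u_n/v_n)$ is uniform in $n$, which reduces to the observation that the approximants converge and therefore lie in a compact $p$-adic ball for $n\gg 0$. Once that uniform bound $K_1$ is in place, everything else is a clean interplay between the product-formula-type inequality $|G|_p|G|_\infty\ge 1$ for nonzero integers, the ultrametric inequality, and the definitions of $U_n$ and $M_n$.
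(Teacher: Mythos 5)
Your proof is correct, and its skeleton --- clear denominators to get an integer $G_n=v_n^D F(t_n/v_n,u_n/v_n)$, bound $|G_n|_\infty\le K_2M_n^D$, bound $|G_n|_p\le K_1U_n$, and invoke Lemma \ref{lem:silly} to force $G_n=0$ once $U_nM_n^D<1/(K_1K_2)$ --- is exactly the paper's. Where you genuinely diverge is in how the $p$-adic upper bound $\left|F(t_n/v_n,u_n/v_n)\right|_p\le K_1U_n$ is obtained. The paper expands $F$ at $(\alpha,\beta)$ as $\sum_{i,j}A_{ij}(X-\alpha)^i(Y-\beta)^j$ and uses the \emph{minimality} of the total degree $D$ to show that $A_{10}=0$ (resp.\ $A_{01}=0$) forces $X$ (resp.\ $Y$) not to occur in $F$ at all; this lets the authors conclude that for $n\gg0$ the linear terms dominate ultrametrically, giving $\left|F(\cdot)\right|_p\le\max\{|A_{10}|_p,|A_{01}|_p\}\,U_n$. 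You instead use the exact factorization $F(X,Y)=(X-\alpha)P(X,Y)+(Y-\beta)Q(Y)$ with $P,Q$ having coefficients in $\ZZ[\alpha,\beta]\subset\QQ_p$, plus the uniform $p$-adic boundedness of $P$ and $Q$ on the ($p$-adically bounded, because convergent) sequence of approximants. Both are valid; yours is arguably cleaner in that it bypasses the minimality of $D$ entirely --- minimality enters your argument only through the statement (it fixes the exponent in $M_n^D$ and guarantees $v_n^D F(t_n/v_n,u_n/v_n)\in\ZZ$), not through the estimate. The one bookkeeping point to make explicit is the one you already flag: since $t_n/v_n\to\alpha$ and $u_n/v_n\to\beta$ in $\QQ_p$, these sequences lie in a fixed ball, so $|P(t_n/v_n,u_n/v_n)|_p$ and $|Q(u_n/v_n)|_p$ admit a bound $K_1$ independent of $n$.
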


\begin{proof}
We observe that $ v_n^D \cdot F\left ( \frac {t_n}{v_n}, \frac {u_n}{v_n}\right )\in\ZZ$ and
\begin{align}
    \left | v_n^D \cdot F\left ( \frac {t_n}{v_n}, \frac {u_n}{v_n}\right )\right |_\infty \leq KM_n^D,\label{eq:kappaIII}
\end{align}
where $K$ is the sum of the Euclidean absolute values of the coefficients of $F(X,Y)$.
Therefore, if $F\left ( \frac {t_n}{v_n}, \frac {u_n}{v_n}\right )$ is not zero, we have

\begin{equation} \label{eq:kappa4I}
    \left |F\left ( \frac {t_n}{v_n}, \frac {u_n}{v_n}\right )\right |_p \geq \left | v_n^D\cdot F\left ( \frac {t_n}{v_n}, \frac {u_n}{v_n}\right )\right |_p\geq \frac 1 {\left | v_n^D \cdot F\left ( \frac {t_n}{v_n}, \frac {u_n}{v_n}\right )\right |_\infty }\geq \frac 1 {KM_n^D}\end{equation}

by Lemma \ref{lem:silly} and \eqref{eq:kappaIII}. On the other hand, we can write
    $$F(X,Y)=\sum_{i,j}A_{ij}(X-\alpha)^i(Y-\beta)^j$$
    with $A_{ij}\in\QQ_p$. We have 
    $$A_{10}=\frac{ \partial F}{\partial X}(\alpha, \beta), \quad A_{01}=\frac{ \partial F}{\partial Y}(\alpha, \beta), $$ 
    if $A_{10}=0$ and $\frac{ \partial F}{\partial X}(X, Y)\not=0$ then the latter polynomial  would give an algebraic dependence relation between $\alpha$ and $\beta$ of total degree $\leq D-1$, therefore $\frac{ \partial F}{\partial X}(X, Y)=0$, that is $X$ does not appear in $F(X,Y)$. Analogously $A_{01}=0$ implies that $\frac{ \partial F}{\partial Y}(X, Y)=0$, that is $Y$ does not appear in $F(X,Y)$. It follows that if $A_{ij}\not=0$  for some $i >0  0$ then $A_{10}\not=0$; and if $A_{ij}\not=0$  for some $j >0$ then $A_{01}\not=0$. Hence, it is easy to see that for every $i,j$ such that $i+j>1$ and $n\gg 0$
    $$\nu_p\left (A_{ij}\left (\frac{t_n}{v_n}-\alpha\right )^i\left (\frac{u_n}{v_n}-\beta\right )^j\right ) > \min\left \{ \nu_p\left (A_{10}\left (\frac{t_n}{v_n}-\alpha\right )\right ) , \nu_p\left (A_{01}\left (\frac{u_n}{v_n}-\beta\right )\right) \right \}.$$
    Therefore for $n\gg 0$, we obtain

\begin{align} 
    \left |F\left ( \frac {t_n}{v_n}, \frac {u_n}{v_n}\right )\right |_p & \leq\max_{ij}\left\{\left  | A_{ij}\left (\frac{t_n}{v_n}-\alpha\right )^i\left (\frac{u_n}{v_n}-\beta\right )^j\right |\right\}\nonumber \\&= 
    \max\left\{ \left |A_{01}\left (\frac{t_n}{v_n}-\alpha \right )\right  |_p,\left |A_{10}\left (\frac{u_n}{v_n}-\beta\right )\right |_p\right \} \leq H \cdot U_n, \label{eq:acca4I}
    \end{align}
for $H=\max\{|A_{01}|_p, |A_{10}|_p\}$. Putting together equations \eqref{eq:kappa4I} and \eqref{eq:acca4I}, we get
    $$\frac 1 {KM_n^D} \leq  \left |F\left ( \frac {t_n}{v_n}, \frac {u_n}{v_n}\right )\right |_p \leq H\cdot U_n,$$ 
for every $n$ such that $F\left ( \frac {t_n}{v_n}, \frac {u_n}{v_n}\right )\not=0$. This implies that there exists $C>0$ such that if $F\left ( \frac {t_n}{v_n}, \frac {u_n}{v_n}\right )\not=0$ then $U_n\cdot M_n>C$. Then hypothesis \eqref{eq:hypoUMD} proves the claim.
\end{proof}

\begin{remark}\label{rem:tuv}
We shall apply Theorem \ref{teo:dipalg} with
\begin{equation*}\label{eq:tuv}
\frac{u_n}{v_n} = Q_n^{\alpha}, \quad \frac{t_n}{v_n} = Q_n^{\beta},
\end{equation*}
with  $u_n,t_n,v_n\in\ZZ$ coprime.
Set $\delta=\max\{0,-v(\alpha),-v(\beta)\},$
then for $n\gg 0$, we have
\begin{equation*}\label{eq:valtuv}
(t_n,u_n, v_n)=p^{K_n+\delta}(A_n,B_n,C_n).
\end{equation*}
Consequently, if  $M_n=\max\{|t_n|_\infty,|u_n|_\infty,|v_n|_\infty\}$ then by Corollary \ref{cor:stimaABC2} there exists $H>0$ such that

\begin{equation}\label{eq:stimaM_n}
M_n\leq Hp^{K_n}\tilde z^n =o( p^{n+K_n}).
\end{equation}
\end{remark}

\subsection{Some consequences on linear dependence}

We specialize Theorem \ref{teo:dipalg} to the case $D=1$, i.e., when we have linear dependence. In \cite{MT}, the authors proved that if the $p$--adic Jacobi--Perron algorithm stops in a finite number of steps, then the initial values are $\mathbb Q$--linearly dependent. Further results about linear dependence and $p$--adic MCFs can be found in \cite{MT2}, where it is conjectured that if we start the $p$--adic Jacobi--Perron algorithm with a $m$--tuple of $\mathbb Q$--linearly dependent numbers, then the algorithm is finite or periodic. Here, exploiting the previous results, we can give a condition that ensures the finiteness of the $p$--adic Jacobi--Perron algorithm when it processes certain $\mathbb Q$--linearly dependent inputs.

\begin{theorem}\label{teo:diplin}
Given $\alpha, \beta \in \mathbb Q_p$, consider
\[ M_n = \max \{ |A_n|_\infty, |B_n|_\infty, |C_n|_\infty \}, \quad U_n = \max \left\{\left| \alpha - \cfrac{A_n}{C_n} \right|_p, \left| \beta - \cfrac{B_n}{C_n} \right|_p\right\},\]
where $(A_n)$, $(B_n)$, $(C_n)$ are the sequences of numerators and denominators of convergents of the MCF representing $(\alpha, \beta)$. If
\[ \lim_{n \rightarrow \infty} U_n\cdot M_n = 0, \]
then either $\alpha,\beta,1$ are linearly independent over $\QQ$ or the $p$-adic MCF expansion of $(\alpha,\beta)$ is finite.
\end{theorem}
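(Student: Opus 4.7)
The plan is to argue by contradiction: suppose $\alpha,\beta,1$ are $\QQ$-linearly dependent (so there exist integers $a,b,c$, not all zero, with $a\alpha+b\beta+c=0$) and simultaneously the $p$-adic MCF of $(\alpha,\beta)$ is infinite; I will derive a contradiction from $U_n M_n\to 0$ via Theorem \ref{teo:dipalg}, combined with a backward-induction argument on the linear recurrence satisfied by the convergents.

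First, set $F(X,Y)=aX+bY+c\in\ZZ[X,Y]$, which is a nonzero polynomial of minimal total degree $D=1$ vanishing at $(\alpha,\beta)$. The case $\alpha,\beta\in\QQ$ is handled directly by Proposition \ref{prop:sifermaconpassi} (which already yields finiteness of the MCF), so I may assume $\alpha,\beta\in\QQ_p\setminus\QQ$, the setting required by Theorem \ref{teo:dipalg}. Using Remark \ref{rem:tuv}, I take the integer sequences $t_n=p^{K_n+\delta}B_n$, $u_n=p^{K_n+\delta}A_n$, $v_n=p^{K_n+\delta}C_n$, so that $u_n/v_n\to\alpha$ and $t_n/v_n\to\beta$ in $\QQ_p$ (with the same $U_n$), and $\max\{|t_n|_\infty,|u_n|_\infty,|v_n|_\infty\}=p^{K_n+\delta}M_n$. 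The hypothesis then supplies the condition required by Theorem \ref{teo:dipalg} for $D=1$, yielding $F(Q_n^\alpha,Q_n^\beta)=0$ for $n\gg 0$, i.e.
$$\epsilon_n:=aA_n+bB_n+cC_n=0\quad\text{for all sufficiently large }n.$$

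The conceptual core is then to show that this forces $(a,b,c)=(0,0,0)$, contradicting the non-triviality of the dependence. Substituting
$$\alpha=\cfrac{\alpha_nA_{n-1}+\beta_nA_{n-2}+A_{n-3}}{\alpha_nC_{n-1}+\beta_nC_{n-2}+C_{n-3}},\qquad \beta=\cfrac{\alpha_nB_{n-1}+\beta_nB_{n-2}+B_{n-3}}{\alpha_nC_{n-1}+\beta_nC_{n-2}+C_{n-3}}$$
into $a\alpha+b\beta+c=0$ and clearing the common denominator yields the three-term recurrence
$$\alpha_n\epsilon_{n-1}+\beta_n\epsilon_{n-2}+\epsilon_{n-3}=0\qquad(n\geq 1).$$
Since the MCF is infinite, every $\alpha_n,\beta_n$ is defined, so the recurrence can be solved for $\epsilon_{n-3}$; starting from the conclusion that $\epsilon_n=0$ for $n\geq N$ and descending, I obtain $\epsilon_n=0$ for every $n\geq -2$. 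The base values then read $\epsilon_{-2}=b$, $\epsilon_{-1}=a$, and $\epsilon_0=aa_0+bb_0+c$, forcing $a=b=0$ and then $c=0$, the desired contradiction.

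The main obstacle I anticipate is in the first step: carefully matching the MCF-style quantities $M_n,U_n$ appearing in the statement with the integer-scaled quantities required by Theorem \ref{teo:dipalg}, i.e., tracking the rescaling factor $p^{K_n+\delta}$ supplied by Remark \ref{rem:tuv} and checking that it feeds Theorem \ref{teo:dipalg} in the needed form. The backward-induction step, while elementary, is the conceptual heart and relies crucially on the infiniteness of the MCF to guarantee that all recurrence coefficients $\alpha_n,\beta_n$ are defined.
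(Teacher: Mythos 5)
Your overall architecture matches the paper's: reduce to a nonzero relation $a\alpha+b\beta+c=0$, feed the convergents into Theorem \ref{teo:dipalg} with $D=1$ to get $\epsilon_n=aA_n+bB_n+cC_n=0$ for $n\gg 0$, then show this forces $(a,b,c)=(0,0,0)$. Your endgame, however, is genuinely different from the paper's. The paper rewrites $\epsilon_{n-1}=-(aV_{n-1}^\alpha+bV_{n-1}^\beta)$ and invokes Corollary \ref{cor:1} to conclude $V_n^\alpha=V_n^\beta=0$ (a step that is quite terse as written; what really seems to be needed there is the nonvanishing of the determinant $V_{n-1}^\alpha V_{n-2}^\beta-V_{n-1}^\beta V_{n-2}^\alpha$ coming from \eqref{diff}). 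Your backward induction on $\alpha_n\epsilon_{n-1}+\beta_n\epsilon_{n-2}+\epsilon_{n-3}=0$, descending from $\epsilon_n=0$ for $n\geq N$ down to $\epsilon_{-2}=b$, $\epsilon_{-1}=a$, $\epsilon_0=aa_0+bb_0+c$, is purely algebraic, needs no valuation estimates, and in my view is cleaner and more self-contained than the paper's argument. The recurrence is correct (it is exactly the relation $a\alpha+b\beta+c=0$ with the matrix formula for $(\alpha,\beta)$ in terms of $(\alpha_n,\beta_n)$ substituted in), and infiniteness of the MCF does guarantee all $\alpha_n,\beta_n$ exist.

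The one place where your write-up has a real gap is precisely the step you flag as "the main obstacle" and then wave through: the claim that the hypothesis $U_nM_n\to 0$ "supplies the condition required by Theorem \ref{teo:dipalg}". It does not, as literally stated. Theorem \ref{teo:dipalg} needs $U_n\cdot\max\{|t_n|_\infty,|u_n|_\infty,|v_n|_\infty\}\to 0$ with \emph{integer} $t_n,u_n,v_n$, and after the rescaling of Remark \ref{rem:tuv} this maximum is $p^{K_n+\delta}M_n$, so you would need $U_n\,p^{K_n}M_n\to 0$, which is strictly stronger than the stated hypothesis since $K_n\to\infty$. The fix is to rerun the Liouville lower bound rather than cite the theorem as a black box: with $S_n=aA_n+bB_n+cC_n\in\ZZ[\frac1p]$ one has $p^{K_n+\delta}S_n\in\ZZ$, so Lemma \ref{lem:silly} gives $|S_n|_p\geq p^{K_n+\delta}/(K'p^{K_n+\delta}M_n)=1/(K'M_n)$ when $S_n\neq 0$, and then $|F(Q_n^\alpha,Q_n^\beta)|_p=|S_n|_p\,|C_n|_p^{-1}=|S_n|_p\,p^{K_n}\geq 1/(K'M_n)$; the factor $p^{K_n}$ coming from $\nu_p(C_n)=-K_n$ exactly absorbs the scaling, and the unscaled hypothesis $U_nM_n\to 0$ then suffices. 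To be fair, the paper's own proof cites Theorem \ref{teo:dipalg} with the same lack of care, so this is a shared defect rather than one you introduced; but since you explicitly identified it as the crux, you should have carried out this computation. A further minor point: excluding the case $\alpha,\beta\in\QQ$ leaves open the case where exactly one of them is rational, in which Theorem \ref{teo:dipalg} does not formally apply (it assumes both lie in $\QQ_p\setminus\QQ$); for $D=1$ the relation then degenerates to $a\alpha+c=0$ or $b\beta+c=0$ and the same estimate goes through, but this should be said (the paper also glosses over it).
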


\begin{proof}
Assume that the $p$-adic MCF for $(\alpha,\beta)$ is not finite, then the sequence  $(Q_n^\alpha, Q_n^\beta)$ $p$-adically converges to $(\alpha,\beta)$  by \cite[Proposition 3]{MT} and $(\alpha,\beta)\not\in\QQ^2$, by \cite[Theorem 5]{MT}. 
Suppose that $A\alpha+B\beta+C=0$ for some $A,B,C\in\QQ$ not all zero. We define the sequence $S_n=AA_{n-1}+BB_{n-1}+CC_{n-1}$; Theorem \ref{teo:dipalg} implies that $S_n=0$ for $n$ sufficiently large. Furthermore, it is straightforward to see that $ S_n=AV_{n-1}^\alpha+B V_{n-1}^\beta$ (see also \cite{MT2}) and by Corollary \ref{cor:1} we should have $V_n^\alpha=V_n^\beta=0$, which is a contradiction, as $(\alpha,\beta)\not\in\QQ^2$.
\end{proof}

\begin{remark}
     Theorem \ref{teo:diplin} is an improvement of a result implicitly contained in \cite[Proposition 10]{MT2}, namely that
     if $(1,\alpha,\beta)$ are linearly dependent over $\QQ$ and there is a constant $K>0$ such that  \begin{equation} \label{eq:hypoprec} \max\{|V_n^\alpha|_p,|V_n^\beta|_p\}\leq \frac K {p^n}\end{equation} then the $p$-adic Jacobi-Perron algorithm stops in finitely many steps when applied to $(\alpha,\beta)$.\\
     In fact \eqref{eq:hypoprec} implies
    $$U_n\cdot p^{K_n}\leq \frac K {p^n},$$
    so that, by \eqref{eq:stimaM_n},
    $$U_n\cdot M_n\leq H\tilde z^n U_np^{K_n}\leq 
    KH \left( \frac {\tilde z} p\right)^n\buildrel\infty\over\rightarrow 0\hbox{ for } n\to\infty.$$
\end{remark}

\subsection{A class of fast convergent $p$-adic MCFs}
Finally, we see some conditions on the partial quotients that produce MCFs converging to algebraically independent numbers or having convergents that satisfy an algebraic relation.

\begin{lemma}\label{lem:costralgindD}  
Given a MCF $[(a_0, a_1, \ldots), (b_0, b_1, \ldots)]$ such that
\begin{align}
    &k_{n+1} \geq (D-1)(k_n+k_{n-1})+2D;\label{eq:kgrandeD}\hbox{ and } \\
    &h_{n+1}\geq  (D-1)k_n+D.\label{eq:condizsubD}
\end{align}
for $n \gg 0$ and $D \geq 1$, then there exists $C\in\mathbb{R}$ such that
$$\min\{v_p(V_n^\alpha),v_p(V_n^\beta)\}\geq (D-1)K_n+Dn +C,\quad\hbox{ for } n\in \mathbb{N}.$$
\end{lemma}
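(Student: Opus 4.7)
My plan is to mimic the induction used in Proposition \ref{prop:costrsupera}, now with the sharper target $g(n) := (D-1)K_n + Dn$. The starting point is the recurrence
\[ V_n = -\frac{\beta_{n+1}}{\alpha_{n+1}} V_{n-1} - \frac{1}{\alpha_{n+1}} V_{n-2}, \]
valid simultaneously for $V_n = V_n^\alpha$ and $V_n = V_n^\beta$, which gives
\[ \nu_p(V_n) \geq \min\!\Bigl\{\nu_p\!\bigl(\tfrac{\beta_{n+1}}{\alpha_{n+1}}\bigr) + \nu_p(V_{n-1}),\; \nu_p\!\bigl(\tfrac{1}{\alpha_{n+1}}\bigr) + \nu_p(V_{n-2})\Bigr\}. \]
Here $\nu_p(1/\alpha_{n+1}) = k_{n+1}$, and the dichotomy already used in Proposition \ref{prop:costrsupera} (either $\nu_p(\beta_{n+1}) > 0$, in which case $\nu_p(\beta_{n+1}/\alpha_{n+1}) > k_{n+1}$, or $\nu_p(\beta_{n+1}) \leq 0$, in which case $\nu_p(\beta_{n+1}/\alpha_{n+1}) = h_{n+1}$) yields, under hypotheses \eqref{eq:kgrandeD}--\eqref{eq:condizsubD}, the uniform lower bound $\nu_p(\beta_{n+1}/\alpha_{n+1}) \geq \min\{k_{n+1}, h_{n+1}\} \geq (D-1)k_n + D$ for $n \gg 0$ (using that $k_{n+1} \geq (D-1)k_n + 2D$).

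Next I would verify that $g(n) = (D-1)K_n + Dn$ satisfies exactly the right two difference inequalities. A direct computation gives
\[ g(n) - g(n-1) = (D-1)k_n + D, \qquad g(n) - g(n-2) = (D-1)(k_n + k_{n-1}) + 2D, \]
which are precisely the lower bounds delivered by \eqref{eq:condizsubD} and \eqref{eq:kgrandeD} respectively. So, assuming inductively that $\nu_p(V_{n-1}) \geq g(n-1) + C$ and $\nu_p(V_{n-2}) \geq g(n-2) + C$, each of the two terms in the $\min$ above is bounded below by $g(n) + C$, closing the induction.

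Finally, since \eqref{eq:kgrandeD} and \eqref{eq:condizsubD} are only assumed for $n \gg 0$, I would fix $N$ beyond which both hypotheses hold and then choose
\[ C = \min_{-2 \leq n \leq N-1} \bigl(\min\{\nu_p(V_n^\alpha), \nu_p(V_n^\beta)\} - g(n)\bigr), \]
so that the desired inequality is trivially true up to index $N-1$ and the inductive step from $n \geq N$ onwards propagates it to all $n$. The conclusion follows by taking the minimum over $V_n^\alpha$ and $V_n^\beta$, each treated identically.

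I do not anticipate a genuine obstacle: the whole point of the formulation is that the hypotheses \eqref{eq:kgrandeD}--\eqref{eq:condizsubD} are tailored exactly to the telescoping identities satisfied by $g(n)$. The only mild subtleties are the two-case analysis for $\nu_p(\beta_{n+1}/\alpha_{n+1})$ (handled just as in Proposition \ref{prop:costrsupera}) and the choice of $C$ absorbing the finitely many initial indices where the hypotheses may fail.
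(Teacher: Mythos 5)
Your proof is correct and follows essentially the same route as the paper's: the same recurrence $V_n=-\frac{\beta_{n+1}}{\alpha_{n+1}}V_{n-1}-\frac{1}{\alpha_{n+1}}V_{n-2}$, the same two-case bound $\nu_p(\beta_{n+1}/\alpha_{n+1})\geq (D-1)k_n+D$, and the same absorption of the finitely many initial indices into the constant $C$. The only (cosmetic) difference is that you run the induction directly on valuations, whereas the paper divides by $p^{(D-1)K_n+Dn}$ and argues via $\ZZ_p$-integrality of the coefficients.
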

\begin{proof}
The argument is the same as in the proof of Proposition \ref{prop:costrsupera}. In fact, if conditions \eqref{eq:kgrandeD} and \eqref{eq:condizsubD}   hold for every $n\geq 0$t hen the claim direcly follows from Proposition \ref{prop:costrsupera} by putting $\ell_n=(D-1)k_n+D$.
In any case hypotheses \eqref{eq:kgrandeD} and \eqref{eq:condizsubD} imply that
\begin{equation} \label{eq:betasualfaD} v_p\left(\frac {\beta_{n+1}}{\alpha_{n+1}}\right )\geq (D-1)k_n+D, \quad\hbox{ for } n\gg0.\end{equation}
Let $V_n$ be one of $V_n^\alpha$, $V_n^\beta$. From the formula \eqref{eq:solita1}
we get for $n\gg 0$
$$ \frac {V_n} {p^{(D-1)K_n+Dn}}= \mu_n\frac {V_{n-1}}{p^{(D-1)K_{n-1}+D(n-1)}}+\nu_n\frac {V_{n-2}}{p^{(D-2)K_{n-1}+D(n-2)}}$$
$$
    \mu_n = -\frac{\beta_{n+1}}{\alpha_{n+1}}\frac 1 {p^{(D-1)k_n+D}},\quad\qquad 
    \nu_n  = -\frac 1{\alpha_{n+1}}\frac 1 {p^{(D-1)(k_n+k_{n-1})+2D}}.
$$
By \eqref{eq:kgrandeD} and \eqref{eq:betasualfaD}  there exists $n_0$ such that $\mu_n,\nu_n\in\ZZ_p$ for $n > n_0$.  Then   
$$\left |\frac {V_n} {p^{(D-1)K_n+Dn}}\right |_p\leq \max\left \{ \left |\frac {V_i} {p^{(D-1)K_i+Di}}\right |_p, i=0,\ldots, n_0\right\}\hbox{ for } n>n_0 $$
so that the claim follows by setting
$$C=\min\left\{v_p\left(\frac {V_i^\alpha} {p^{(D-1)K_i+Di}}\right ),v_p\left(\frac {V_i^\beta} {p^{(D-1)K_i+Di}}\right ),i=0 ,\ldots, n_0\right \} .$$
\end{proof}

\begin{theorem}\label{teo:costralgindD} Assume that $\alpha,\beta$ are algebraically dependent and let $F(X,Y)\in\mathbb{Q}[X,Y]$ be a non zero polynomial of minimum total degree $D$ such that $F(\alpha,\beta)=0$. If the MCF expansion of $(\alpha, \beta)$ satisfies conditions
\eqref{eq:kgrandeD} and \eqref{eq:condizsubD}, then $F(Q_n^\alpha,Q_n^\beta)=0$ for $n\gg 0$.
\end{theorem}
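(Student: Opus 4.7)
The plan is to reduce the statement to an application of Theorem \ref{teo:dipalg} to the sequence of convergents $(Q_n^\alpha, Q_n^\beta)$. The only nontrivial thing I will need to verify is the decay hypothesis $\lim_{n\to\infty} U_n M_n^D = 0$, which will follow by combining the $p$-adic lower bound on $\nu_p(V_n^\alpha), \nu_p(V_n^\beta)$ provided by Lemma \ref{lem:costralgindD} with the archimedean upper bound on $|A_n|_\infty, |B_n|_\infty, |C_n|_\infty$ coming from Corollary \ref{cor:stimaABC2}.

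First I would dispose of the degenerate case in which the $p$-adic MCF of $(\alpha,\beta)$ terminates: by \cite[Theorem 5]{MT} this forces $(\alpha,\beta)\in\QQ^2$, so $(Q_n^\alpha, Q_n^\beta) = (\alpha,\beta)$ for $n$ large and $F(Q_n^\alpha, Q_n^\beta) = 0$ automatically. Otherwise the MCF is infinite and, by \cite[Proposition 3]{MT}, $(Q_n^\alpha, Q_n^\beta)\to(\alpha,\beta)$ in $\QQ_p^2$; in the main case where both $\alpha,\beta\notin\QQ$ I may clear denominators to assume $F\in\ZZ[X,Y]$, still of minimal total degree $D$, so the hypotheses of Theorem \ref{teo:dipalg} are in reach. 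Following Remark \ref{rem:tuv} I set $(t_n, u_n, v_n) = p^{K_n+\delta}(B_n, A_n, C_n)\in\ZZ^3$, so that $u_n/v_n = Q_n^\alpha$, $t_n/v_n = Q_n^\beta$, and $v_n\ne 0$ since $\nu_p(C_n) = -K_n$.

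Next I would assemble the two key estimates. From Corollary \ref{cor:stimaABC2} there exists $H>0$ with
\[M_n = \max\{|t_n|_\infty,|u_n|_\infty, |v_n|_\infty\}\le H\, p^{K_n}(p\tilde x)^n.\]
For the $p$-adic side, since $\alpha - A_n/C_n = V_n^\alpha/C_n$ and $|C_n|_p = p^{K_n}$, we have
\[U_n = p^{-K_n}\max\{|V_n^\alpha|_p, |V_n^\beta|_p\},\]
and Lemma \ref{lem:costralgindD} yields $\min\{\nu_p(V_n^\alpha),\nu_p(V_n^\beta)\}\ge (D-1)K_n + Dn + C_0$ for some constant $C_0$, so $U_n \le p^{-DK_n - Dn - C_0}$. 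Multiplying the two bounds gives
\[U_n M_n^D \le H^D p^{-C_0}\,\tilde x^{Dn},\]
which tends to $0$ since $\tilde x<1$. Invoking Theorem \ref{teo:dipalg} then delivers $F(Q_n^\alpha, Q_n^\beta) = 0$ for all $n\gg 0$.

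The main obstacle is essentially bookkeeping: the substance of the argument is already packaged in Lemma \ref{lem:costralgindD}, and what remains is to observe that the hypotheses \eqref{eq:kgrandeD} and \eqref{eq:condizsubD} are precisely calibrated so that the $p^{DK_n}$ factor in $M_n^D$ exactly cancels the $p^{-DK_n}$ factor in $U_n$, leaving the genuinely decaying geometric contribution $\tilde x^{Dn}$ (emerging from $p^{-Dn}\cdot(p\tilde x)^{Dn}$) to drive the product to zero.
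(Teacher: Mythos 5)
Your proposal is correct and follows essentially the same route as the paper: both reduce the statement to Theorem \ref{teo:dipalg} by combining the $p$-adic bound of Lemma \ref{lem:costralgindD} (giving $U_n \leq C p^{-D(K_n+n)}$) with the archimedean bound $M_n = o(p^{K_n+n})$ from Corollary \ref{cor:stimaABC2} and Remark \ref{rem:tuv}, so that $U_n M_n^D \to 0$. Your write-up is in fact slightly more careful than the paper's, since you explicitly dispose of the finite-expansion case and note the need to clear denominators to get $F\in\ZZ[X,Y]$.
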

\begin{proof}
Let $M_n,U_n$ be as in Theorem \ref{teo:dipalg}. For $n\gg 0$ and a suitable constant $C>0$ we have
\begin{align*} U_n  & =\frac 1 {p^{K_n}} \max\left\{ | V_n^\alpha|_p, |V_n^\beta |_p \right\}\\
&\leq \frac C {p^{D(K_n+n)}}\hbox{ by Lemma \ref{lem:costralgindD}}\\
&\leq \frac C {p^{(K_n+n)D}}\end{align*}
so that $\lim_{n\to\infty} U_n\cdot M_n=0$ by  \eqref{eq:stimaM_n}. 
Then the claim follows from Theorem \ref{teo:dipalg}.
\end{proof}

\begin{theorem}\label{teo:costralgindnew}
Given $(\alpha, \beta) = [(a_0, a_1, \ldots), (b_0, b_1, \ldots)]$ such that
\begin{equation*} \label{eq:kappa}
\lim_{n \rightarrow \infty} \frac {k_n}{k_{n-1}+k_{n-2}} = \infty, \quad \lim_{n \rightarrow \infty} \frac {h_n}{k_{n-1}} = \infty
\end{equation*}
in $\mathbb R$, then either $\alpha,\beta$ are algebraically independent or there exists a non zero polynomial $F(X,Y)\in\mathbb{Q}[X,Y]$ such that $F(Q_n^\alpha,Q_n^\beta)=0$ for $n\gg 0$.
\end{theorem}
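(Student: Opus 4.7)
The plan is to reduce Theorem \ref{teo:costralgindnew} to a direct application of Theorem \ref{teo:costralgindD}. The dichotomy is set up precisely for this: either $\alpha,\beta$ are algebraically independent (case one of the conclusion), or they satisfy some non-zero polynomial relation, in which case I must produce a concrete $F$ with $F(Q_n^\alpha,Q_n^\beta)=0$ for $n\gg 0$. Theorem \ref{teo:costralgindD} does exactly the latter, provided the partial quotients satisfy the degree-$D$ inequalities \eqref{eq:kgrandeD} and \eqref{eq:condizsubD}. So the whole proof boils down to checking that the two asymptotic hypotheses of Theorem \ref{teo:costralgindnew} force those inequalities for every fixed $D\geq 1$ once $n$ is large enough.

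So I would start by assuming that $\alpha,\beta$ are algebraically dependent and picking a non-zero $F\in\QQ[X,Y]$ of minimum total degree $D$ with $F(\alpha,\beta)=0$ (after clearing denominators we may take $F\in\ZZ[X,Y]$, as required by Theorem \ref{teo:dipalg}/Theorem \ref{teo:costralgindD}). With this $D$ now fixed, the goal is to verify, for all sufficiently large $n$, that
\[
k_{n+1}\geq (D-1)(k_n+k_{n-1})+2D
\quad\text{and}\quad
h_{n+1}\geq (D-1)k_n+D.
\]

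For the first inequality, I would use the fact that $k_n\geq 1$ for every $n\geq 1$ (since $|a_n|_p>1$ implies $\nu_p(a_n)<0$). Together with the assumption $k_n/(k_{n-1}+k_{n-2})\to\infty$, this first forces $k_n\to\infty$, and consequently $k_n+k_{n-1}\to\infty$. Now pick $n$ large enough that simultaneously $k_{n+1}/(k_n+k_{n-1})\geq D$ and $k_n+k_{n-1}\geq 2D$. Then
\[
k_{n+1}\geq D(k_n+k_{n-1})=(D-1)(k_n+k_{n-1})+(k_n+k_{n-1})\geq (D-1)(k_n+k_{n-1})+2D,
\]
which is \eqref{eq:kgrandeD}. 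For the second inequality, $h_{n+1}/k_n\to\infty$ combined with $k_n\to\infty$ lets me choose $n$ large enough that $h_{n+1}\geq Dk_n$ and $k_n\geq D$, whence $h_{n+1}\geq (D-1)k_n+k_n\geq (D-1)k_n+D$, which is \eqref{eq:condizsubD}.

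At this point, the hypotheses of Theorem \ref{teo:costralgindD} are satisfied with our chosen $D$, so it directly yields $F(Q_n^\alpha,Q_n^\beta)=0$ for $n\gg 0$, producing exactly the polynomial required in the second alternative of the conclusion. There is no real obstacle here; the only mildly subtle point is observing that $k_n\geq 1$ forces $k_n\to\infty$ from the hypothesis, which is what lets the additive constants $2D$ and $D$ in \eqref{eq:kgrandeD} and \eqref{eq:condizsubD} be absorbed. All the genuine work — the Jacobi–Perron recursion estimates producing the approximation quality, and the Liouville-type step passing from quality of approximation to vanishing of $F$ at convergents — has already been carried out in Lemma \ref{lem:costralgindD}, Theorem \ref{teo:dipalg}, and Theorem \ref{teo:costralgindD}.
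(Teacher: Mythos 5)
Your proposal is correct and follows essentially the same route as the paper: fix the minimal total degree $D$ of an algebraic relation, use the two limit hypotheses (together with $k_n\geq 1$, hence $k_n\to\infty$) to verify conditions \eqref{eq:kgrandeD} and \eqref{eq:condizsubD} for $n\gg 0$, and then invoke Theorem \ref{teo:costralgindD}. Your write-up is in fact slightly more careful than the paper's, which asserts the two inequalities for $n\gg_D 0$ without spelling out the absorption of the additive constants $2D$ and $D$.
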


\begin{proof}
Let $D>0$. For $n\gg_D 0$ 
we have 
\begin{align*}
 k_n & \geq D(k_{n-1}+k_{n-2})\geq (D-1) D(k_{n-1}+k_{n-2})+2D,\hbox{ and }\\
 h_n &\geq D k_{n-1}\geq (D-1)k_{n-1} +D
\end{align*}
 Then the claim follows from Theorem \ref{teo:costralgindD}.
\end{proof}

\begin{remark}
	By Faltings theorem, an algebraic curve having infinitely many rational points must have genus $0$ or $1$. This is a strong condition on polynomials $F(X,Y)\in\mathbb{Q}[X,Y]$ such that $F(Q_n^\alpha,Q_n^\beta)=0$, for $n\gg 0$.
\end{remark}

\end{document}